\setlist{leftmargin=*}
\newcommand{\Aut}{\mbox{\rm Aut}}
\newcommand{\Inn}{\mbox{\rm Inn}}
\newcommand{\Ind}{\mbox{\rm Ind}}
\newcommand{\Infl}{\mbox{\rm Infl}}
\newcommand{\Res}{\mbox{\rm Res}}
\newcommand{\Irr}{\text{\rm Irr}}
\newcommand{\Lin}{\text{\rm Lin}}
\newcommand{\GL}{\text{\rm GL}}
\newcommand{\SL}{\text{\rm SL}}
\newcommand{\PGL}{\text{\rm PGL}}
\newcommand{\PGU}{\text{\rm PGU}}
\newcommand{\PSL}{\text{\rm PSL}}
\newcommand{\SU}{\text{\rm SU}}
\newcommand{\PSU}{\text{\rm PSU}}
\newcommand{\PSp}{\text{\rm PSp}}
\newcommand{\id}{\mbox{\rm id}}
\newcommand{\ad}{\mbox{\rm ad}}
\begin{document}

\title*{The $R_{\infty}$-property of flat manifolds: Toward the eigenvalue one 
property of finite groups}
\titlerunning{The eigenvalue one property}
\author{Gerhard Hiss and\\ Rafa{\l} Lutowski}
\institute{Gerhard Hiss \at Lehrstuhl f{\"u}r Algebra und Zahlentheorie, 
RWTH Aachen University, 52056 Aachen, Germany, 
\email{gerhard.hiss@math.rwth-aachen.de}
\and Rafa{\l} Lutowski \at Institute of Mathematics, Faculty of
Mathematics, Physics and Informatics, University of Gda\'nsk, 
ul. Wita Stwosza 57,
80-308 Gda\'nsk, Poland, \email{rafal.lutowski@ug.edu.pl}}
%
%

\maketitle

\begin{dedication}
Dedicated to Otto Kegel
\end{dedication}

\abstract{We introduce a conjecture of Dekimpe, De Rock and Penninckx and sketch 
some major steps in its proof. This text presents an extended account of the 
talk of the first author given at the conference.}

\section{Introduction} The purpose of this note is to outline the proof, 
recently obtained by the authors in~\cite{HiLu}, of a conjecture of Dekimpe, 
De Rock and Penninckx. This yields a sufficient condition for a flat manifold to
be an $R_{\infty}$-manifold. Here, we will sketch the main steps of our proof, 
giving details for some of the more elementary arguments. We show how to reduce 
the conjecture to 
the finite simple groups. Then, using extensive, detailed knowledge about the 
automorphism groups, the subgroup structure and the character theory of the 
non-abelian finite simple groups, these can be ruled out as minimal 
counterexamples to the conjecture. For the necessary information we rely largely
on~\cite{GLS}. Our arguments work for the majority of these groups, but they may 
fail for particular small instances. These are treated with computational 
methods, using the systems Chevie~\cite{chevie, Michel} and GAP~\cite{GAP04}.

With respect to groups and characters we use standard notation. In particular,
$\Irr(G)$ denotes the set of irreducible complex characters of the finite 
group~$G$. Characters of $\mathbb{R}G$-modules are tacitly viewed as complex 
characters.

\section{Motivation}
\label{sec:1}
Let~$M$ be a real closed manifold with fundamental group~$\pi_1(M)$ and let
$f \colon M \rightarrow M$ be a homeomorphism. 
Several invariants can be attached to~$f$.
For example, the Reidemeister number~$R(f)$ of~$f$ is the number of 
$f_\#$-conjugacy classes on $\pi_1(M)$, where~$f_\#$ is the induced map on 
$\pi_1(M)$. By definition,~$R(f)$ is a positive integer or infinity. Other 
invariants are the Lefshetz number $L(f)$, defined as an alternating sum of 
traces of the maps induced by~$f$ on the homology groups of~$M$, or the Nielsen 
number $N(f)$, the number of fixed point classes of~$f$ on~$M$ under a certain 
equivalence relation. In particular,~$f$ has at least $N(f)$ fixed points.

The manifold $M$ is called an $R_\infty$-manifold, if $R(f) = \infty$ for 
every homeomorphism~$f$ of~$M$.

Let us consider the special case where $M$ is an \textit{infra-nilmanifold}, 
i.e., $M = \Gamma \backslash L$, where $L$ is a connected, simply connected, 
nilpotent Lie group, and $\Gamma \leq L \rtimes C$ is discrete, cocompact and 
torsion-free for some maximal compact subgroup $C \leq \Aut(L)$. 
Then $\pi_1(M) \cong \Gamma.$ In the case when $\Gamma \leq L$, i.e.~$M$ is a 
\textit{nilmanifold}, $R(f) = \infty$ implies that $L(f) = N(f) = 0$; see the 
introduction of~\cite{DDRP}. 

Let us even further specialize an infra-nilmanifold to the case when 
$L = \mathbb{R}^m$. Then~$M$ is a \textit{flat manifold}. Here, $C = O(m)$,
$\Gamma \cap \mathbb{R}^m \cong \mathbb{Z}^m$, and there is a finite group $G$ such 
that
$$1 \longrightarrow \mathbb{Z}^m \longrightarrow \Gamma \longrightarrow G 
\longrightarrow 1$$
is a short exact sequence. Conjugation of~$\Gamma$ on~$\mathbb{Z}^m$ induces a 
homomorphism $\rho \colon G \rightarrow \GL_m( \mathbb{Z} )$, the corresponding 
\textit{holonomy representation.}

Let $M$ be a flat manifold, and $\rho$ the corresponding holonomy representation.
A $\mathbb{Z}$-subrepresentation of~$\rho$ is a
representation $\rho' \colon G \rightarrow \GL_d( \mathbb{Z} )$ arising from a
$\rho(G)$-invariant, pure sublattice $Y \leq \mathbb{Z}^m$ of rank~$d$;
here,~$Y$ is called pure, if some $\mathbb{Z}$-basis of~$Y$ extends to a
$\mathbb{Z}$-basis of~$\mathbb{Z}^m$. We may optionally view~$\rho$ or $\rho'$ 
as a $\mathbb{Q}$-representation or an $\mathbb{R}$-representation of~$G$ in a
natural way.

\begin{theorem}[Dekimpe, De Rock, Penninckx, 2009, \cite{DDRP}]
\label{thm:1}
Suppose there is a $\mathbb{Z}$-sub\-re\-pre\-sent\-ati\-on 
$\rho' \colon G \rightarrow \GL_d( \mathbb{Z} )$, which is an irreducible component
of~$\rho$ of multiplicity one as a $\mathbb{Q}$-subrepresentation, and such that
the following two conditions are satisfied:

\begin{enumerate}[{\rm (i)}, widest=ii]
\item If~$\rho''$ is a $\mathbb{Q}$-subrepresentation of~$\rho$ of
degree~$d$ such that $\rho'(G)$ and $\rho''(G)$ are conjugate in
$\GL_d( \mathbb{Q} )$, then~$\rho'$ and~$\rho''$ are equivalent.

\item For all $D \in N_{\GL_d( \mathbb{Z} )}( \rho'(G) )$ there is $g \in G$
such that $\rho'(g)D$ has eigenvalue $1$.
\end{enumerate}
Then~$M$ is an $R_\infty$ manifold. \hfill{$\Box$}
\end{theorem}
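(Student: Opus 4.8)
The plan is to reduce the statement to an eigenvalue‑one assertion inside $N_{\GL_m(\mathbb{Z})}(\rho(G))$ and then to extract the required element from the two hypotheses. First I would invoke the standard reductions: $M=\Gamma\backslash\mathbb{R}^m$ is aspherical with $\pi_1(M)\cong\Gamma$, so a homeomorphism $f$ of $M$ induces an automorphism $f_\#$ of $\Gamma$, well defined up to an inner automorphism, and $R(f)=R(f_\#)$ counts the $f_\#$-twisted conjugacy classes of $\Gamma$, unchanged by that ambiguity. Hence it suffices to show $R(\phi)=\infty$ for every $\phi\in\Aut(\Gamma)$. By Bieberbach's structure theory the translation lattice $\mathbb{Z}^m$ is characteristic in $\Gamma$, so $\phi$ restricts to some $\phi_1\in\GL_m(\mathbb{Z})$ and descends to some $\bar\phi\in\Aut(G)$; applying $\phi$ to the relation $\gamma a\gamma^{-1}=\rho(g)(a)$ (for $\gamma$ lying over $g$) gives $\rho(\bar\phi(g))=\phi_1\rho(g)\phi_1^{-1}$, so $\phi_1\in N_{\GL_m(\mathbb{Z})}(\rho(G))$ and conjugation by $\phi_1$ realizes $\bar\phi$ on $\rho(G)$.

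The analytic core is the criterion: \emph{if $\det(I-\rho(g)\phi_1)=0$ for some $g\in G$, then $R(\phi)=\infty$.} I would prove this by sorting the $\phi$-twisted classes of $\Gamma$ according to the $\bar\phi$-twisted class of their image in the finite group $G$ and producing infinitely many classes over such a $g$. Writing a lift as $(t_g,\rho(g))\in\Gamma$, the elements of $\Gamma$ over $g$ are the affine maps $(a+t_g,\rho(g))$ with $a\in\mathbb{Z}^m$, and twisted conjugation by $(b,I)\in\mathbb{Z}^m$ shifts the translation part by $(I-\rho(g)\phi_1)(b)$. Thus the $\mathbb{Z}^m$-twisted classes over $g$ are indexed by $\mathbb{Z}^m/(I-\rho(g)\phi_1)\mathbb{Z}^m$, which is infinite exactly because $\det(I-\rho(g)\phi_1)=0$; enlarging $\mathbb{Z}^m$ to the finite-index preimage in $\Gamma$ of $\Stab_G(g)$, and then to all of $\Gamma$, merges only finitely many of these classes at a time, so infinitely many survive.

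It remains to produce such a $g$ from (i) and (ii). Let $Y\le\mathbb{Z}^m$ be the pure sublattice affording $\rho'$, and $W=Y\otimes\mathbb{Q}$; purity means $Y=W\cap\mathbb{Z}^m$. Since $\rho(g)\phi_1=\phi_1\rho(\bar\phi^{-1}(g))$, the subspace $\phi_1(W)$ is a $\rho(G)$-stable $\mathbb{Q}$-subspace of degree $d$ on which $G$ acts, in the basis obtained by applying $\phi_1$ to a basis of $W$, through $\rho'\circ\bar\phi^{-1}$; in particular the matrix group of this action is exactly $\rho'(G)$, so hypothesis (i) forces the representation on $\phi_1(W)$ to be equivalent to $\rho'$. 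Because $\rho'$ occurs in $\rho$ with $\mathbb{Q}$-multiplicity one, this gives $\phi_1(W)=W$, whence $\phi_1(Y)=\phi_1(W)\cap\phi_1(\mathbb{Z}^m)=W\cap\mathbb{Z}^m=Y$. Therefore $D:=\phi_1|_Y\in\GL_d(\mathbb{Z})$, and $D\rho'(g)D^{-1}=\rho(\bar\phi(g))|_Y=\rho'(\bar\phi(g))$ shows $D\in N_{\GL_d(\mathbb{Z})}(\rho'(G))$. Now (ii) supplies $g\in G$ with $\rho'(g)D=(\rho(g)\phi_1)|_Y$ having eigenvalue $1$; as $\rho(g)\phi_1$ preserves $W$ and restricts to $\rho'(g)D$ there, it has eigenvalue $1$ on $\mathbb{R}^m$, i.e.\ $\det(I-\rho(g)\phi_1)=0$. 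The criterion then gives $R(\phi)=\infty$, and since $\phi\in\Aut(\Gamma)$ was arbitrary, $M$ is an $R_\infty$-manifold.

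I expect the main obstacle to be the criterion of the second paragraph: the twisted conjugacy is taken over a genuine extension $1\to\mathbb{Z}^m\to\Gamma\to G\to 1$ rather than a direct product, so one must carefully track how the finite groups $G$ and $\Stab_G(g)$ act on the infinite set of lattice-level twisted classes and verify that infiniteness is not destroyed when passing up to $\Gamma$. By contrast, the argument of the third paragraph — combining (i), multiplicity one and purity to obtain $D$, and then invoking (ii) — is fairly direct once the criterion is available.
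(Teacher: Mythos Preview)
The paper does not prove this theorem; it is quoted from \cite{DDRP} and closed with a box, so there is no in-paper proof to compare against. That said, your argument is correct and is essentially the one in the original reference: reduce $R_\infty$ for $M$ to an eigenvalue-one statement for the linear part of any $\phi\in\Aut(\Gamma)$, and then use the hypotheses on $\rho'$ to pin down where that eigenvalue must occur.

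Two small clarifications worth making explicit. In your second paragraph, $\Stab_G(g)$ should be the $\bar\phi$-twisted stabilizer $\{h\in G : hg\,\bar\phi(h)^{-1}=g\}$, not the ordinary centralizer; you clearly mean this, since that is the subgroup whose preimage in $\Gamma$ preserves the fibre over $g$ under twisted conjugation. The ``merging'' step can also be said more directly: every $\Gamma$-twisted class is a union of at most $[\Gamma:\mathbb{Z}^m]=|G|$ many $\mathbb{Z}^m$-twisted classes, so infinitely many $\mathbb{Z}^m$-classes over $g$ already force $R(\phi)=\infty$, with no need for the intermediate step through the stabilizer.

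Your third paragraph is exactly right. The representation on $\phi_1(W)$, read in the transported basis, is $g\mapsto\rho'(\bar\phi^{-1}(g))$, so its image is literally $\rho'(G)$ and hypothesis (i) applies; multiplicity one then forces $\phi_1(W)=W$, purity gives $\phi_1(Y)=Y$, and $D:=\phi_1|_Y\in N_{\GL_d(\mathbb{Z})}(\rho'(G))$ feeds into (ii). This is the intended mechanism.
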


Condition~(ii) on $\rho'$ in Theorem~\ref{thm:1} implies:

\begin{itemize}
\item The $\mathbb{Q}$-representation $\rho'$ is $\mathbb{R}$-irreducible.

\item The normalizer $N_{\GL_d( \mathbb{Z} )}( \rho'(G) )$ has finite order.
\end{itemize}
These observations follow, e.g., from the proof of \cite[Theorem~A]{SzczOut}.

\section{The eigenvalue one condition}
\label{sec:2}
Motivated by Theorem~\ref{thm:1}, its authors formulated a conjecture which we 
are now going to introduce. Before doing so, we fix some notation which will be
kept throughout this article.

Let $G$ be a finite group, $V$ a finite-dimensional $\mathbb{R}G$-module and
$\rho \colon G \rightarrow \GL(V)$ the representation afforded by~$V$. Moreover, 
$n \in \GL(V)$ is an element of finite order normalizing $\rho(G)$.

\begin{definition}
Let the notation be as introduced above. We then say that:
\begin{enumerate}[1), widest*=3]
\item The triple $(G,V,n)$ has the \textit{$E1$-property}, if there is $g \in G$ 
such that $\rho(g)n$ has eigenvalue~$1$.

\item The pair $(G,V)$ has the \textit{$E1$-property}, if $(G,V,n')$ has the 
$E1$-property for all $n' \in \GL(V)$ of finite order normalizing $\rho(G)$.

\item The group $G$ has the \textit{$E1$-property}, if $(G,V')$ has the 
$E1$-property for all \textbf{irreducible}, \textbf{non-trivial}
$\mathbb{R}G$-modules $V'$ of \textbf{odd dimension}.
\end{enumerate}
\end{definition}

Let us give some examples.

\begin{example}
If $V = \mathbb{R}$ with trivial action of~$G$, i.e.~$V$ is the trivial
$\mathbb{R}G$-module, then $V$ does not have the $E1$-property. Indeed, 
$\rho(G) = \{ 1 \}$ in this case, and $n = -1$ violates the eigenvalue 
one condition.
\end{example}

\begin{example}
\label{exa:2}
If~$G$ is an elementary abelian $p$-group, then~$G$ has the $E1$-property.
Indeed, if~$p$ is odd, then~$G$ does not have any non-trivial, irreducible,
odd-dimensional module over~$\mathbb{R}$. If $p = 2$ and~$V$ is non-trivial 
and irreducible, then $\dim(V) = 1$ and
$\rho( G ) = \{ \pm 1 \}$, which proves our claim, as the only elements of
finite order in $\mathbb{R}^* = \GL( V )$ are $\pm1$.
\end{example}

\begin{example}[Dekimpe, De Rock, Penninckx, 2009,  \cite{DDRP}]
Let $G$ be the extraspecial $2$ group $2_+^{1+4}$ of 
order~$32$, and let~$V$ be the irreducible $\mathbb{R}G$-module of 
dimension~$4$. Then $(G,V)$ does not have the $E1$-property.
\end{example}

With these examples in mind, we can now formulate the following conjecture, 
which is a slight generalization of the original conjecture in 
\cite[Conjecture~$4.8$]{DDRP}.

\begin{conjecture}[Dekimpe, De Rock, Penninckx, 2009, \cite{DDRP}]
\label{conj:1}
Every finite group has the $E1$-property. \hfill{$\Box$}
\end{conjecture}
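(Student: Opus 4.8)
The plan is to prove Conjecture~\ref{conj:1} by a minimal counterexample argument, reducing step by step to the case of finite simple groups.

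\medskip

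\noindent\textbf{Reduction strategy.} Suppose the conjecture is false, and let $G$ be a counterexample of minimal order: there is an irreducible, non-trivial $\mathbb{R}G$-module~$V$ of odd dimension and an element $n \in \GL(V)$ of finite order normalizing $\rho(G)$ such that $\rho(g)n$ never has eigenvalue~$1$, for any $g \in G$. First I would record some easy closure properties of the $E1$-property under the basic constructions. If $N \trianglelefteq G$ acts trivially on~$V$, then~$V$ is an $\mathbb{R}(G/N)$-module and the element~$n$ still normalizes the image; so by minimality $N = 1$, i.e.\ we may assume $\rho$ is \emph{faithful}. More substantially, I would aim to show that a minimal counterexample must be characteristically simple — hence a direct power $S^k$ of a non-abelian simple group~$S$, since the elementary abelian case is settled in Example~\ref{exa:2}. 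The key point is that an irreducible $\mathbb{R}G$-module restricted to a normal subgroup decomposes (by Clifford theory) into a sum of conjugate irreducibles, and the normalizing element~$n$ permutes these constituents; one analyses the cases according to whether~$n$ stabilizes a constituent or moves it, and the oddness of $\dim V$ forces strong restrictions (an even number of constituents being swapped contributes even-dimensional pieces, so the "diagonal" behaviour must occur). I expect that, combined with a careful bookkeeping of how $n$ acts on $\operatorname{Soc}(G)$ and on the set of simple factors, one reduces to: either $G$ is simple, or $G = S^k$ with $n$ inducing a transitive cycle on the factors — and the latter can be collapsed back to a statement about a single~$S$ with a twisted action.

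\medskip

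\noindent\textbf{The simple case.} This is where the real work lies, and where I expect the main obstacle. One must show that no non-abelian finite simple group~$S$ — more precisely, no "almost simple-type" situation $\rho(S) \trianglelefteq \langle \rho(S), n\rangle$ — can fail the $E1$-property on an odd-dimensional irreducible real module. The element~$n$ of finite order normalizing $\rho(S)$ lies, after suitable identification, in $\rho(S)\cdot\Inndiag(S)\cdot\langle\text{field, graph autos}\rangle$ times scalars; the condition "$\rho(g)n$ has eigenvalue~$1$ for some~$g$" translates into a statement about the character value $\chi(gn)$ — by an averaging/orthogonality argument, the multiplicity of the eigenvalue~$1$ of $\rho(g)n$ relates to $\sum_{g} \chi(gn)$ over the relevant coset, so it suffices to show this coset sum is non-zero, or to exhibit a single suitable~$g$. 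The strategy for the generic case is: use the classification to list the odd-dimensional irreducible real characters of~$S$ (these are rather restricted — often Steinberg-like, unipotent, or small Weil-type characters); use detailed knowledge of $\Aut(S)$ and the extendibility/twisting of these characters (as in~\cite{GLS}); and produce an explicit element~$g$ in the coset of~$n$ on which the character is non-zero, or whose eigenvalue-$1$ space is provably non-zero for dimension-parity reasons (an element of odd order acting on an odd-dimensional space with determinant~$1$ must fix a vector). The hard part will be handling the interaction of the outer automorphism~$n$ with the character: when $n$ involves a graph or field automorphism, the extended character of $\langle\rho(S),n\rangle$ need not be real or may vanish on large parts of the coset, and one must show it cannot vanish \emph{everywhere} on elements that would otherwise witness eigenvalue~$1$.

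\medskip

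\noindent\textbf{Small cases.} Finally, the generic arguments inevitably break for finitely many small simple groups — sporadic groups, small-rank groups of Lie type in small characteristic, alternating groups of small degree — because the relevant characters are too few or the automorphism group too large relative to the module dimension. These I would dispatch by direct computation: enumerate the odd-dimensional non-trivial irreducible $\mathbb{R}$-modules, compute the (finite, by the remark after Theorem~\ref{thm:1}) normalizer $N_{\GL(V)}(\rho(G))$, and check the $E1$-property directly, using Chevie~\cite{chevie, Michel} and GAP~\cite{GAP04}. Together with the reduction, this establishes that no minimal counterexample exists, proving the conjecture.
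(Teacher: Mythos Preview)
Your reduction to simple groups is essentially the paper's: faithfulness, characteristic simplicity via a Clifford argument descending to a characteristic normal subgroup (the paper's Lemma~\ref{lem:3}), and then collapsing a direct power $S^k$ to a single simple factor (the paper's Proposition~\ref{prop:1}). This part is fine and matches the paper closely.

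The gap is in the simple case. Your proposed mechanism ---~``the multiplicity of the eigenvalue~$1$ of $\rho(g)n$ relates to $\sum_g \chi(gn)$, so it suffices to show this coset sum is non-zero''~--- is not correct as stated: the fixed-space dimension of $\rho(g)n$ is $\frac{1}{|gn|}\sum_i \chi'((gn)^i)$, a sum over powers of a \emph{single} element, not a sum over the coset $Gn$, and a non-vanishing coset sum does not by itself produce an eigenvalue~$1$. Nor does non-vanishing of $\chi'(gn)$ alone suffice. The paper does not argue via coset sums at all; instead it introduces two concrete devices you are missing. First, the \emph{large degree method} (Proposition~\ref{prop:3} and especially Corollary~\ref{cor:4}): if some $\ad_g\circ\nu$ has order~$2$, one is done immediately, which disposes of sporadic groups, the Tits group, and $A_n$ ($n\neq 6$) in one line, and handles many Lie-type characters by a simple inequality $\dim V > (|\alpha|-1)\,|C_G(\alpha')|^{1/2}$. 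Second, the \emph{restriction method} via parabolic subgroups: in odd characteristic the key fact is that $|O_r(P)|$ is odd, so any odd-dimensional real irreducible constituent of $\Res^G_P(V)$ has $O_r(P)$ in its kernel (Lemma~\ref{lem:5}), forcing~$\chi$ into a tightly controlled Harish-Chandra series; in even characteristic one restricts to~$U = O_2(B)$ and exploits the $T$-orbits on $\mathrm{Lin}(U)$ (Proposition~\ref{prop:4}), with Lusztig's Jordan decomposition (Lemma~\ref{lem:6}) handling the residual cases. These are the ideas that actually carry the proof for groups of Lie type; your sketch does not reach them, and the averaging heuristic you propose would not replace them.
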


\section{The main theorem}

The purpose of this article is to announce the proof of Conjecture~\ref{conj:1}.

\begin{theorem}[\cite{HiLu}]
\label{thm:2}
Every finite group has the $E1$-property. \hfill{$\Box$}
\end{theorem}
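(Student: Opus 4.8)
\section{Proof proposal for Theorem~\ref{thm:2}}

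The plan is to argue by induction on the order of~$G$, reducing the $E1$-property to a statement about non-abelian finite simple groups. Suppose, for contradiction, that~$G$ is a minimal counterexample: there is an irreducible, non-trivial $\mathbb{R}G$-module~$V$ of odd dimension, together with an $n \in \GL(V)$ of finite order normalizing $\rho(G)$, such that $\rho(g)n$ never has eigenvalue~$1$. First I would dispose of the solvable case: if~$G$ has a non-trivial normal subgroup~$N$ acting trivially on~$V$, then~$\rho$ factors through~$G/N$, so by minimality $(G/N,V)$ has the $E1$-property, a contradiction; hence we may assume~$V$ is faithful. One then shows that~$G$ cannot be solvable by combining Clifford theory with the analysis of modules over~$\mathbb{R}$ (an odd-dimensional real irreducible is of real type, so its endomorphism ring is~$\mathbb{R}$), arranging that a minimal normal subgroup forces an eigenvector fixed by some $\rho(g)n$; the abelian case is already covered by Example~\ref{exa:2} in spirit. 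Thus the minimal counterexample has a non-abelian composition factor, and one reduces — via a careful treatment of the generalized Fitting subgroup $F^*(G)$ and the layer $E(G)$, using that~$n$ permutes the components — to the situation where a non-abelian finite simple group~$S$ controls the module, with~$G$ sitting between~$S$ and $\Aut(S)$ (or a direct power thereof permuted by~$n$).

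The heart of the argument is then to prove the $E1$-property for the (almost) simple groups. Here I would use the classification of finite simple groups and treat the families separately. For each non-abelian simple~$S$, and each $\rho(G)$ normalized by~$n$ of finite order, the element $\rho(g)n$ lies in a finite subgroup of $\GL(V)$ containing $\rho(G)$; I would replace~$n$ by an element of a finite group~$H$ with $\rho(G) \trianglelefteq H \leq \GL(V)$, so that~$H/\rho(G)$ embeds into $\Out(S)$ (times permutation of components and field-type data), and the question becomes: does the coset $\rho(G)\cdot hn$ contain an element with eigenvalue~$1$, i.e.\ does some irreducible constituent of $\Res$ to the cyclic group generated by a suitable element have trivial sub? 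Equivalently, writing $\chi$ for the character of~$V$ (extended to~$H$), one needs the class function $h \mapsto \langle \chi|_{\langle h\rangle}, 1\rangle$ to be non-zero on the relevant coset — which, averaging, amounts to showing $\sum_{g\in G}\chi(gn)\cdot(\text{something positive})\neq 0$, or more robustly to exhibiting an explicit element of order coprime to the relevant data whose eigenvalue~$1$ multiplicity one can compute from the known character table of~$S$ and its extension. Concretely: for alternating groups one uses the natural permutation-type constituents; for groups of Lie type one uses regular semisimple or unipotent elements together with Deligne--Lusztig / Harish-Chandra descriptions of the characters, exploiting that the degree is odd to pin down the type; and for sporadic groups one reads it off the \texttt{ATLAS}. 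The bookkeeping for outer automorphisms (diagonal, field, graph) is where~\cite{GLS} is indispensable.

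The main obstacle, as the excerpt already signals, is uniformity: the "generic" argument — pick an element~$g \in G$ of a prescribed type so that $\rho(g)n$ is forced to fix a vector, using lower bounds on eigenvalue-$1$ multiplicities of characters restricted to cyclic subgroups — will degrade for small-rank or small-field cases where those multiplicity bounds are too weak or the outer automorphism group is exceptionally large (e.g.\ $\mathrm{D}_4$ with its triality, or $\Sp_4$, $\mathrm{G}_2$ in small characteristic, or $A_5,A_6,A_7,\dots$ and the twisted rank-one groups). For these finitely many exceptions the strategy is to pass to explicit computation: enumerate, up to conjugacy in $\GL(V)$, all finite groups~$H$ with $\rho(G) \trianglelefteq H$ and $|H/\rho(G)|$ dividing $|\Out(S)|$ times the permutation/field bound, and for each check directly that every coset outside $\rho(G)$ still meets the eigenvalue-$1$ locus — this is a finite check on character tables and is carried out in Chevie~\cite{chevie,Michel} and GAP~\cite{GAP04}. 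A secondary technical point needing care throughout is the interaction of the \emph{odd-dimensional} hypothesis with scalars: since $\dim V$ is odd, $-\Id \notin \rho(G)$ unless it is realized by an outer element, and one must track whether~$n$ can be taken to lie in $\SL(V)$ or introduces a sign — this is exactly what rules out the trivial-module and the $2_+^{1+4}$ obstructions, and confirms that the odd-dimensionality in the definition of the $E1$-property for groups is not merely a convenience but the precise hypothesis under which the theorem is true.
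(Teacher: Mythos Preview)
Your broad outline --- reduce a minimal counterexample to a non-abelian simple group, then invoke the classification and treat the families with character theory, mopping up small cases by machine --- is the paper's strategy. The reduction itself is organised differently, however: the paper does not pass through $F^*(G)$ or the layer. Instead it shows (Lemma~\ref{lem:3}) that if a proper \emph{characteristic} subgroup $H \lhd G$ has the $E1$-property then so does~$G$; hence a minimal counterexample is characteristically simple, and Proposition~\ref{prop:1} (on direct powers $L \times \cdots \times L$) forces it to be simple outright. This is shorter than a component analysis and avoids tracking how~$n$ permutes components; your route via $F^*(G)$ could be made to work but is not what is done.

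Where your proposal is genuinely thin is the simple-group step, and here two organising ideas that carry most of the load are missing. First, Corollary~\ref{cor:4}: if the coset $\Inn(G)\nu$ in $\Aut(G)$ contains an involution, then $(G,V,n)$ has the $E1$-property automatically. This one line disposes of all sporadic groups, the Tits group, and every $A_m$ with $m \neq 6$, since $|\Out(G)| \leq 2$ there --- no permutation-character analysis or Atlas reading is needed, contrary to what you propose. Second, the dichotomy by the \emph{parity of the defining characteristic}~$r$ for groups of Lie type is essential, not cosmetic. When~$r$ is odd, $|O_r(P)|$ is odd, so its only odd-dimensional real irreducible is trivial; Lemma~\ref{lem:5} then forces any real odd-degree $\chi$ to occur in $R_T^G(\lambda)$ with $\lambda^2 = 1_T$, which cuts the problem down to a handful of characters treatable by the restriction method in parabolics. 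When $r = 2$ this argument collapses entirely, and a different mechanism is required (Proposition~\ref{prop:4} on $T$-orbits in $\Lin(U)$, and then Lusztig's Jordan decomposition via Lemmas~\ref{lem:6}--\ref{lem:7} for the residual families $\PSL_d$, $\PSU_d$, $E_6$, ${^2\!E}_6$, $P\Omega_8^+$). Your generic plan of picking a good element and bounding the eigenvalue-$1$ multiplicity of $\chi$ on $\langle gn \rangle$ is indeed the content of the paper's large degree method (Proposition~\ref{prop:3}), but on its own it does not close: without the involution shortcut and the $r$ odd/even split feeding into the restriction method, the case analysis would not terminate.
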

In view of Theorem~\ref{thm:1}, this has the following consequence.
\begin{corollary}
\label{cor:1}
Let~$M$ be a flat manifold with holonomy representation 
$\rho \colon G \rightarrow \GL_n( \mathbb{Z} )$. Suppose there is a non-trivial
$\mathbb{Z}$-subrepresentation $\rho' \colon G \rightarrow \GL_d( \mathbb{Z} )$ 
of odd degree~$d$, which is irreducible and of multiplicity one as an
$\mathbb{R}$-subrepresentation of~$\rho$.  
Suppose further that~$\rho'$ satisfies Condition~{\rm (i)} 
of\,\,{\rm Theorem~\ref{thm:1}}.

Then~$M$ is an $R_{\infty}$-manifold. \hfill{$\Box$}
\end{corollary}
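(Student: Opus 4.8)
The plan is to deduce the corollary directly from Theorem~\ref{thm:2} and Theorem~\ref{thm:1}, both applied to the given $\mathbb{Z}$-subrepresentation $\rho'$. Three things have to be checked in order to invoke Theorem~\ref{thm:1}: that $\rho'$ is an irreducible component of $\rho$ of multiplicity one \emph{as a $\mathbb{Q}$-subrepresentation}, that Condition~(i) holds, and that Condition~(ii) holds. Condition~(i) is part of the hypothesis. For the first point, note that an $\mathbb{R}$-irreducible representation is in particular $\mathbb{Q}$-irreducible, so $\rho'$ is an irreducible component of $\rho$ over $\mathbb{Q}$; and if $\rho'$ occurred in $\rho$ with multiplicity $m\geq 2$ over $\mathbb{Q}$, then, $\rho'\otimes\mathbb{R}$ being irreducible, $\rho'\otimes\mathbb{R}$ would occur in $\rho\otimes\mathbb{R}$ with multiplicity $\geq m\geq 2$, contradicting the hypothesis of multiplicity one over $\mathbb{R}$. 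Hence the $\mathbb{Q}$-multiplicity is one.

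The main point is Condition~(ii). Put $V:=\mathbb{R}^d$, regarded as an $\mathbb{R}G$-module via $\rho'$, so that $\GL(V)=\GL_d(\mathbb{R})$. By hypothesis $V$ is irreducible and non-trivial, and $\dim V=d$ is odd; thus $V$ is precisely the kind of module occurring in the definition of the $E1$-property of a group. Since $G$ has the $E1$-property by Theorem~\ref{thm:2}, the pair $(G,V)$ has the $E1$-property; that is, $(G,V,n')$ has the $E1$-property for every element $n'\in\GL_d(\mathbb{R})$ of finite order normalizing $\rho'(G)$. Hence, to obtain Condition~(ii), it is enough to show that every $D\in N_{\GL_d(\mathbb{Z})}(\rho'(G))$ has finite order.

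This finiteness is the only step of the deduction that needs a genuine argument, and the only place where the $\mathbb{R}$-irreducibility of $\rho'$ is used. I would argue as follows. Conjugation by $D$ induces an automorphism of the \emph{finite} group $\rho'(G)$, of some finite order $k$; hence $D^k$ lies in the centralizer $C_{\GL_d(\mathbb{Z})}(\rho'(G))$, which is exactly the unit group of the $\mathbb{Z}$-order $\mathcal{O}:=\End_{\mathbb{Z}G}(\mathbb{Z}^d)$ in the $\mathbb{Q}$-division algebra $D_0:=\End_{\mathbb{Q}G}(\mathbb{Q}^d)$ (Schur's lemma, using that $\mathbb{Q}^d$ is $\mathbb{Q}G$-irreducible). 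Since $D_0\otimes_{\mathbb{Q}}\mathbb{R}\cong\End_{\mathbb{R}G}(V)$ is a division algebra over $\mathbb{R}$, it is $\mathbb{R}$, $\mathbb{C}$ or $\mathbb{H}$, so $D_0$ is $\mathbb{Q}$, an imaginary quadratic field, or a definite quaternion algebra over $\mathbb{Q}$; in each of these cases the unit group of an order is finite (Dirichlet's unit theorem, respectively the classical finiteness of unit groups of orders in definite quaternion algebras over $\mathbb{Q}$). Therefore $D^k$, and hence $D$, has finite order; in fact $N_{\GL_d(\mathbb{Z})}(\rho'(G))$ itself is finite, being an extension of a subgroup of the finite group $\Aut(\rho'(G))$ by the finite group $\mathcal{O}^\times$. (This finiteness can alternatively be extracted from the proof of \cite[Theorem~A]{SzczOut}.) All hypotheses of Theorem~\ref{thm:1} being verified for $\rho'$, it follows that $M$ is an $R_\infty$-manifold.

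The hard part is not located in this deduction at all: essentially all of the mathematics sits in Theorem~\ref{thm:2}, whose proof proceeds by reduction to the non-abelian finite simple groups followed by a detailed case analysis based on \cite{GLS} and on computations in Chevie~\cite{chevie, Michel} and GAP~\cite{GAP04}. Granting Theorem~\ref{thm:2}, the corollary is a fairly routine translation back into the language of flat manifolds, the only non-formal ingredient being the finiteness of the arithmetic normalizer established above.
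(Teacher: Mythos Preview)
Your deduction is correct and matches the paper's intended argument: the paper gives no explicit proof of this corollary (just a box), indicating that it is meant as an immediate consequence of Theorem~\ref{thm:2} together with Theorem~\ref{thm:1}, which is exactly how you proceed. Your verification that $\mathbb{R}$-irreducibility and $\mathbb{R}$-multiplicity one pass down to~$\mathbb{Q}$, and your finiteness argument for $N_{\GL_d(\mathbb{Z})}(\rho'(G))$, fill in the details the paper leaves to the reader; the paper itself points to \cite[Theorem~A]{SzczOut} for this finiteness, and you rightly note that reference as an alternative.

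One small simplification you could make: since $d$ is odd and $V$ is $\mathbb{R}$-irreducible, $V$ is in fact absolutely irreducible (Remark~\ref{rem:0}), so $\End_{\mathbb{R}G}(V)=\mathbb{R}$ and hence $D_0=\mathbb{Q}$ outright. Thus $\mathcal{O}=\mathbb{Z}$ and $\mathcal{O}^\times=\{\pm 1\}$, and the imaginary-quadratic and definite-quaternion cases never arise here. Your more general analysis is not wrong, just more than is needed.
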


Corollary~\ref{cor:1} for solvable groups~$G$ has been proved by Lutowski and
Szczepa{\'n}ski in \cite[Theorem~$1.4$]{LuSzcz}.

The proof of Theorem~\ref{thm:2} uses the classification of the finite simple 
groups. We are now going to sketch the main steps, mostly without proofs.

\subsection{The restriction method}
\label{RestrictionMethod}
Let us begin to set up our notation, which will be valid throughout this 
subsection. Let~$G$ be a finite group and~$V$ a non-trivial, odd-dimensional
$\mathbb{R}G$-module. Write $\rho \colon G \rightarrow \GL(V)$ for the representation 
afforded by $V$. The assertion of the conjecture only concerns the 
image~$\rho(G)$, so we may and will, for the
rest of this subsection, assume that $\rho$ is faithful. We identify~$G$ with
$\rho(G) \leq \GL(V)$. Also, $n \in N_{\GL(V)}(G)$ is an element of finite
order.

\begin{remark}
\label{rem:0}
If~$V$ is irreducible, it is absolutely irreducible. \hfill{$\Box$}
\end{remark}

It is natural to search for elements $g \in G$ such that~$gn$ has eigenvalue~$1$
in suitable subgroups of~$G$, to which we can apply an inductive hypothesis.

\begin{lemma}
\label{lem:1}
Let $H \leq G$. Suppose that the following conditions are satisfied.
\begin{itemize}
\item The group $H$ is $n$-invariant.
\item There is $V_1 \leq V$, $H$-invariant and $n$-invariant.
\item The triple $(H,V_1,n)$ has the $E_1$-property.
\end{itemize}
Then $(G,V,n)$ has the $E1$-property.
\end{lemma}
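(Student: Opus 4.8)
The plan is to unwind the definitions and transport an eigenvector from $V_1$ up to $V$; the argument is essentially formal. First I would check that the data $(H,V_1,n)$ is a bona fide triple, so that the third hypothesis is not vacuous. Since $V_1 \leq V$ is invariant under both $H$ and $n$, restriction yields a representation $\rho_1 \colon H \rightarrow \GL(V_1)$ and an element $n_1 := n|_{V_1} \in \GL(V_1)$. As $n$ has finite order, so does $n_1$. As $H$ is $n$-invariant inside $\GL(V)$, we have $nhn^{-1} \in H$ for every $h \in H$, and restricting this identity to the invariant subspace $V_1$ gives $n_1 \rho_1(h) n_1^{-1} \in \rho_1(H)$; hence $n_1 \in N_{\GL(V_1)}(\rho_1(H))$ is of finite order, and the assumption that $(H,V_1,n)$ has the $E1$-property is meaningful.

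Next I would invoke that assumption directly: there is $h \in H$ such that $\rho_1(h) n_1$ has eigenvalue $1$ on $V_1$, i.e.\ there is a nonzero $v \in V_1$ with $\rho_1(h)(n_1 v) = v$. Finally I would push this relation up to $V$. Since $h$ and $n$ act on $V_1$ through their restrictions, the vector $v$, viewed as an element of $V$, satisfies $\rho(h)(nv) = v$, so $\rho(h)n \in \GL(V)$ has eigenvalue $1$; as $h \in H \leq G$, the element $g := h$ witnesses the $E1$-property of $(G,V,n)$.

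I do not anticipate a genuine obstacle: the only step needing a moment's attention is the verification, sketched in the first paragraph, that the restricted triple $(H,V_1,n_1)$ is admissible. Note in particular that no use is made of irreducibility, odd-dimensionality, or faithfulness of the $H$-action on $V_1$; those conditions enter only in the notion of a \emph{group} having the $E1$-property, not in the triple version used here.
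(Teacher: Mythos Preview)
Your argument is correct and is essentially the paper's own proof: the paper phrases the same idea in block upper-triangular matrix form (choosing a basis of $V$ through $V_1$ so that $h$ and $n$ have upper-left blocks $h_1$ and $n_1$), while you pass the eigenvector $v \in V_1$ directly into $V$. Your extra care in verifying that $(H,V_1,n_1)$ is an admissible triple is a welcome addition that the paper leaves implicit.
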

\begin{proof}
Choosing a basis of~$V$ through~$V_1$, the elements of~$H$ and $n$ are represented
by matrices of the following shape:
$$H = \left\{ \left( \begin{array}{c|c} * & * \\ \hline 0 & * \end{array} \right) \right\},\quad\quad
n = \left( \begin{array}{c|c} n_1 & * \\ \hline 0 & * \end{array} \right).$$
Since $(H,V_1,n_1)$ has the $E1$-property, there is
$$h = \left( \begin{array}{c|c} h_1 & * \\ \hline 0 & * \end{array} \right) \in H,$$
such that $h_1n_1$ has eigenvalue~$1$.
Thus $hn$ has eigenvalue $1$.
\end{proof}

In the following two lemmas, we present two important applications of the 
restriction method.

\begin{lemma}
\label{lem:2}
Let $S \leq V$ be an irreducible $\mathbb{R}G$-submodule such that the following
conditions hold.
\begin{itemize}
\item The module $V$ is $S$-homogeneous.
\item The pair $(G,S)$ has the $E1$-property.
\end{itemize}
Then $(G,V)$ has the $E1$-property.
\end{lemma}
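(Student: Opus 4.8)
The plan is to reduce the $S$-homogeneous case to the single-component case handled by Lemma~\ref{lem:1}. Since $V$ is $S$-homogeneous as an $\mathbb{R}G$-module, we may write $V \cong S \otimes_{\mathbb{R}} W$, where $W$ is a multiplicity space carrying a natural action of the commuting algebra $\End_{\mathbb{R}G}(V)$; by Remark~\ref{rem:0}, $S$ is absolutely irreducible, so $\End_{\mathbb{R}G}(S) = \mathbb{R}$, and the isotypic structure is as simple as possible. The element $n$ normalizes $\rho(G)$, hence it permutes the irreducible $\mathbb{R}G$-submodules of $V$ isomorphic to $S$, so $n$ acts on the homogeneous component (which is all of $V$) and, via Skolem--Noether type reasoning on $\End_{\mathbb{R}G}(V) \cong \mathrm{Mat}_{\dim W}(\mathbb{R})$, we can decompose the action of $n$ as a "diagonal" part along $S$ and a part on $W$.

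The key step is then to locate a single copy $V_1 \leq V$ of $S$ inside $V$ that is simultaneously $G$-invariant (automatic, being a submodule) and $n$-invariant. To do this, consider the action of $n$ on $W$: it is an invertible real matrix of finite order (since $n$ has finite order), so it has an eigenvector over $\mathbb{C}$, but we need a \emph{real} invariant subspace. An element of finite order acting on a real vector space always stabilizes a line or a plane; combined with the fact that we have freedom in how $n$ interacts with the $\mathbb{R}$-scalars $\End_{\mathbb{R}G}(S)$, I expect one can arrange an $n$-invariant line in $W$, which produces an $n$-invariant (and trivially $G$-invariant) submodule $V_1 \cong S$ of $V$. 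On $V_1$ the induced element $n_1 \in N_{\GL(V_1)}(G)$ still has finite order, and by hypothesis $(G, S) = (G, V_1)$ has the $E1$-property, so $(G, V_1, n_1)$ has the $E1$-property. Applying Lemma~\ref{lem:1} with this $H = G$ and $V_1$ then yields that $(G, V, n)$ has the $E1$-property. Since $n$ was an arbitrary finite-order element of $N_{\GL(V)}(G)$, the pair $(G, V)$ has the $E1$-property.

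The main obstacle I anticipate is the passage from "$n$ has finite order" to "$n$ stabilizes a one-dimensional real submodule of the form $V_1 \cong S$". If $S$ is of complex or quaternionic type this is false in general — a finite-order real matrix need not fix a line — so one must instead stabilize a $2$- or $4$-dimensional real $n$-invariant submodule $V_1$ which is still $\mathbb{R}G$-isomorphic to $S$ (this uses that $n$ commutes with $\End_{\mathbb{R}G}(V)$ up to its action on the multiplicity space, so it preserves the isotypic type). The eigenvector/invariant-subspace argument must be run at the level of $W$ equipped with the appropriate scalar field $\End_{\mathbb{R}G}(S) \in \{\mathbb{R}, \mathbb{C}, \mathbb{H}\}$; over any of these division rings a finite-order element has an invariant "line", giving an $n$-invariant $V_1 \cong S$. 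Then the rest is a routine application of Lemma~\ref{lem:1}, noting that $\dim V_1 = \dim S$ is again odd (as $V$ is odd-dimensional and $S$-homogeneous, $\dim S$ is odd), so the $E1$-property hypothesis for $(G,S)$ is exactly what is available.
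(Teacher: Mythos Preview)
Your overall strategy---decompose $V \cong S \otimes W$, write $n = t \otimes w$ via Schur's lemma and the absolute irreducibility of $S$, and then search for an $n$-invariant copy of $S$ by finding a $w$-invariant line in $W$---is sound and does lead to a proof. However, the argument you actually give for the existence of that invariant line is where the gap lies. You write that a finite-order real matrix ``stabilizes a line or a plane'' and then appeal to ``freedom in how $n$ interacts with the $\mathbb{R}$-scalars'' to upgrade a plane to a line. That step does not work: rescaling $w$ by a nonzero real scalar does not change its invariant subspaces, so if $w$ happened to act as a nontrivial rotation on a $2$-plane you could not extract a line this way. Your final paragraph then worries about $S$ being of complex or quaternionic type, but this contradicts what you correctly observed earlier: $\dim S$ is odd, so Remark~\ref{rem:0} forces $\End_{\mathbb{R}G}(S)=\mathbb{R}$, and the complex/quaternionic discussion is irrelevant.

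The missing observation is simply that $\dim W$ is \emph{odd}: since $\dim V = (\dim S)(\dim W)$ and $\dim V$ is odd, both factors are odd. Hence the characteristic polynomial of $w$ on $W$ has odd degree and therefore a real root, giving a $w$-eigenline $\ell\leq W$ with no finite-order hypothesis needed. Then $V_1:=S\otimes\ell$ is an $n$-invariant $\mathbb{R}G$-submodule isomorphic to $S$, $n|_{V_1}$ has finite order and normalizes the image of $G$, and Lemma~\ref{lem:1} with $H=G$ finishes. With this one-line fix your route is complete and is arguably more elementary than the paper's argument, which instead passes to $A=\langle G,n\rangle$, picks an odd-dimensional irreducible $\mathbb{R}A$-submodule $V_1'\leq V$, and uses character extension along the cyclic quotient $A/G$ together with Clifford theory to force $V_1'$ to already be $G$-irreducible (hence $\cong S$ and $n$-stable). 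Both arguments exploit the oddness of $\dim V$ at the decisive moment; yours does so via a real-eigenvalue count on $W$, the paper via the existence of an odd-dimensional $A$-summand.
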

\begin{proof} Recall that $n \in \GL(V)$ is an arbitrary element of finite order 
normalizing~$G$. Put $A := \langle G, n \rangle \leq \GL(V)$. Let $V_1' \leq V$ be 
an irreducible $\mathbb{R}A$-submodule of~$V$ of odd dimension and let 
$V_1 \leq V_1'$ be an irreducible $\mathbb{R}G$-submodule of~$V_1'$. Then~$V_1$
and~$V_1'$ are absolutely irreducible by Remark~\ref{rem:0}.

By hypothesis, $V_1 \cong S \cong nV_1$ as $\mathbb{R}G$-modules. As $A/G$ is 
cyclic and~$V_1$ is 
absolutely irreducible, the character of~$V_1$ extends to~$A$. Then, by Clifford 
theory, every absolutely irreducible $\mathbb{R}A$-submodule of 
$\Ind_G^A( V_1 )$ has dimension $\dim( V_1 )$. Hence $V_1 = V_1'$, and 
thus~$V_1$ is $n$-invariant. The claim follows from Lemma~\ref{lem:1}, applied 
with $H = G$.
\end{proof}

\begin{lemma}
\label{lem:3}
Let $H \unlhd G$ be characteristic in $G$.
Suppose that~$V$ is irreducible and 
let $S$ be an irreducible $\mathbb{R}H$-submodule of $V$.

If $(H,S)$ has the $E1$-property, so does $(G,V)$.
\end{lemma}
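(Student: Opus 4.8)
The plan is to reduce this to Lemma~\ref{lem:2} by exploiting the fact that $H$ is \emph{characteristic} in $G$, which forces $n$ to normalize $H$, so that the conjugation action of $A := \langle G, n\rangle$ on the set of isomorphism classes of irreducible $\mathbb{R}H$-modules is well-defined. Since $V$ is an irreducible $\mathbb{R}G$-module, by Clifford theory $V|_H$ is a direct sum of $G$-conjugates of $S$, and in particular $V|_H$ is a sum of modules all of whose simple constituents lie in the single $G$-orbit of $[S]$. The first step is to pass from $V$ to a single homogeneous block: let $T := \sum_{W \cong S} W$ be the $S$-homogeneous component of $V|_H$. Standard Clifford theory shows that $\Stab_G(T)$ acts on $T$, that $V \cong \Ind_{\Stab_G(T)}^G(T)$, and that the $G$-orbit of $T$ consists of the distinct homogeneous components of $V|_H$, which are permuted transitively by $G$.

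Next I would bring $n$ into the picture. Because $H$ is characteristic in $G$ and $n$ normalizes $G$, $n$ normalizes $H$; hence $n$ permutes the $H$-homogeneous components of $V|_H$, and these are exactly the homogeneous components already permuted transitively by $G$. Therefore $A = \langle G, n\rangle$ permutes this same finite set of subspaces $\{T = T_1, T_2, \dots, T_r\}$ transitively, with $G$ already acting transitively on it. Set $K := \Stab_A(T_1)$; then $K \cap G = \Stab_G(T_1) =: G_1$, and since $G$ is already transitive on the $T_i$ we get $A = G K$, so $K/G_1 \cong A/G$ is cyclic. Now $T_1$ is a $K$-invariant, $G_1$-invariant subspace of odd dimension (it is a direct summand of the odd-dimensional $V$, but one must check oddness — see the obstacle below), $H \leq G_1$, and $(H, S)$ has the $E1$-property by hypothesis. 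The idea is to apply Lemma~\ref{lem:2} \emph{inside} $\GL(T_1)$ with the group $G_1$ and the module $T_1$: since $T_1$ is $S$-homogeneous as an $H$-module and $(H,S)$ has the $E1$-property, and $H \unlhd G_1$, Lemma~\ref{lem:3} applied one level down — or a direct Clifford-theoretic argument as in the proof of Lemma~\ref{lem:2} — shows $(G_1, T_1)$ has the $E1$-property; in particular $(G_1, T_1, n_1)$ has the $E1$-property for the relevant $n_1 \in \GL(T_1)$ coming from a suitable power or coset representative of $K$ that stabilizes $T_1$. Finally Lemma~\ref{lem:1}, applied with the subgroup $G_1 \leq G$, the element $n$ (which after the block decomposition acts on $T_1$ through $n_1$ up to the permutation it induces), and $V_1 = T_1$, yields that $(G, V, n)$ has the $E1$-property; as $n$ was arbitrary of finite order normalizing $G$, $(G,V)$ has the $E1$-property.

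The main obstacle is the parity and $n$-invariance bookkeeping: Lemma~\ref{lem:1} requires an honest $n$-invariant subspace $V_1$ with $(H', V_1, n)$ having the $E1$-property, but a priori $n$ only permutes the $T_i$ and need not fix $T_1$; one must replace $n$ by the element $n' = n^k \cdot g$ for a suitable $g \in G$ and exponent $k$ so that $n'$ stabilizes $T_1$, and then relate the $E1$-property for $(G,V,n)$ to that for the stabilizer data. The cleanest route is: decompose $V|_A$, pick an irreducible $\mathbb{R}A$-submodule $V' \leq V$, note (by Remark~\ref{rem:0} and oddness of $\dim V$, hence of $\dim V'$ after checking that the relevant multiplicities force odd dimension) that $V'$ is absolutely irreducible; restrict $V'$ to $H$, pick a simple summand which is $G$-conjugate hence $A$-conjugate to $S$, and run the homogeneous-component argument relative to $A$ rather than $G$, so that transitivity of $A$ and the cyclicity of $A/G$ do the work. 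Concretely, the subtle point to nail down is that every irreducible $\mathbb{R}A$-constituent of $V$ has odd dimension — this needs the interplay between $\dim V$ odd, the (absolute) irreducibility coming from $A/G$ cyclic, and Clifford theory over $\mathbb{R}$, exactly as in the proof of Lemma~\ref{lem:2} — after which the restriction to $H$ and the hypothesis on $(H,S)$ finish via Lemma~\ref{lem:1}.
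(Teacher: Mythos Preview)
Your plan contains all the right ingredients but is tangled where the paper's proof is direct, and one step skirts circularity. You correctly observe that $H$ characteristic forces $n$ to normalize~$H$, that $G$ permutes the $H$-homogeneous components of $V$ transitively, and that therefore $\langle G,n\rangle$ permutes this same set. The oddness of $\dim T_1$ that worries you is immediate: transitivity gives $\dim V = r\cdot \dim T_1$, and $\dim V$ is odd. The real issue is what you do next. You pass to $G_1 = \Stab_G(T_1)$ and try to establish that $(G_1,T_1)$ has the $E1$-property; to do this you invoke ``Lemma~\ref{lem:3} applied one level down'', which is the statement you are proving, and your fallback (``a direct Clifford-theoretic argument as in the proof of Lemma~\ref{lem:2}'') does not apply as stated, since Lemma~\ref{lem:2} needs $T_1$ to be homogeneous as a $G_1$-module, whereas you only know it is $S$-homogeneous as an $H$-module.

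The paper avoids $G_1$ entirely. Since $G$ already acts transitively on the homogeneous components and $n$ permutes them, there is $g\in G$ with $gn\,V_1 = V_1$ (this is your ``replace $n$ by $n'$'' move, with $k=1$; no higher power is needed). Now $gn\in\GL(V_1)$ has finite order and normalizes the image of $H$ there. Apply Lemma~\ref{lem:2} with the pair $(H,V_1)$ in place of $(G,V)$: $V_1$ is $S$-homogeneous over~$H$ and $(H,S)$ has the $E1$-property, so $(H,V_1,gn)$ has it. Then Lemma~\ref{lem:1}, with the subgroup $H$ (not $G_1$), the subspace $V_1$, and the element $gn$, gives that $(G,V,gn)$ has the $E1$-property; hence some $g'gn$ has eigenvalue~$1$, which is exactly $(G,V,n)$ having the $E1$-property. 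Your detour through $\mathbb{R}A$-submodules is unnecessary once you see that Lemma~\ref{lem:2} already handles the homogeneous block for~$H$ directly.
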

\begin{proof} The group~$G$ permutes the homogeneous components of 
$\Res^G_H( V )$ transitively. Hence the $S$-homogeneous component~$V_1$ of 
$\Res^G_H( V )$ has odd dimension, and there exists $g \in G$ such that 
$gnV_1 = V_1$.

Thus $gn \in \GL(V_1)$ has finite order and normalizes~$H$. 
By~Lemma~\ref{lem:2}, the triple $(H,V_1,gn)$ has the $E1$-property. In turn,
$(G,V,gn)$ has the $E1$-property by Lemma~\ref{lem:1}. As~$n$ was arbitrary, 
this proves our assertion.
\end{proof}

\subsection{Reduction to the finite simple groups}
\label{ReductionToSimpleGroups}
We indicate how the following proposition, whose proof we omit, and the 
restriction method yield a reduction of the main theorem to the finite simple 
groups. 

\begin{proposition}
\label{prop:1}
Let $G = L \times \cdots \times L$, where~$L$ is a non-abelian finite
simple group, i.e.\ $G$ is non-abelian and characteristically simple.

If~$L$ has the $E1$-property, then $G$ has. \hfill{$\Box$}
\end{proposition}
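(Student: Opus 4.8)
The plan is to reduce the statement about the direct power $G = L^k$ to the known $E1$-property of the simple factor $L$, by exhibiting inside $G$ a suitable normal subgroup carrying an irreducible submodule to which the restriction machinery of Section~\ref{RestrictionMethod} applies. Since the conjecture only concerns the image of the representation, assume $\rho \colon G \rightarrow \GL(V)$ is faithful with $V$ a non-trivial, odd-dimensional, irreducible $\mathbb{R}G$-module, and let $n \in N_{\GL(V)}(G)$ have finite order. The first step is to understand $\Res^G_H(V)$ for $H = L^k$ itself, or more precisely for the $k$ diagonal-factor subgroups $L_i \cong L$; here $V$, being irreducible for a direct product, is a tensor product $V \cong V_1 \otimes \cdots \otimes V_k$ of absolutely irreducible $\mathbb{R}L$-modules $V_i$ (using Remark~\ref{rem:0} and the fact that over an algebraically closed field irreducibles of a direct product are outer tensor products, together with a descent-to-$\mathbb{R}$ argument since each factor is absolutely irreducible).

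The key observation is that at least one tensor factor, say $V_1$, must be non-trivial and odd-dimensional: non-triviality because $V$ is faithful (if every $V_i$ were trivial, $V$ would be trivial), and odd-dimensionality because $\dim V = \prod_i \dim V_i$ is odd, forcing every $\dim V_i$ to be odd. After reordering, group the tensor factors so that $V_1, \dots, V_r$ are the non-trivial ones (each odd-dimensional) and $V_{r+1}, \dots, V_k$ are trivial; then $V \cong W$ as an $\mathbb{R}G$-module where $W$ is the outer tensor product over the first $r$ factors inflated to $G$, and $G$ acts through $L^r$. So without loss of generality $r = k$ and every $V_i$ is non-trivial and odd-dimensional. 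Now let $H = L_1 \unlhd G$ be the first diagonal factor, which is characteristic in $G = L^k$ (it is one of the minimal normal subgroups, and the automorphism group of $L^k$ permutes these $k$ factors, but $H$ being characteristic requires care — in fact the individual factors are \emph{not} characteristic in $L^k$; rather the collection is permuted). This is where the argument needs adjustment.

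The correct route is to apply Lemma~\ref{lem:3} not with a single factor but to observe that $G = L^k$ has $\mathrm{soc}(G) = G$ itself, so instead one should exploit the tensor decomposition directly together with Lemma~\ref{lem:2}. Here is the cleaner plan: take $S$ to be an irreducible $\mathbb{R}G$-submodule realizing the tensor structure; $V$ is automatically $S$-homogeneous since $V$ is irreducible. By Lemma~\ref{lem:2} it suffices to show $(G,S)$ has the $E1$-property, i.e.\ for every finite-order $n' \in N_{\GL(S)}(G)$ there is $g \in G$ with $\rho(g)n'$ having eigenvalue $1$. Restrict to $H = L_1$, the first tensor factor: $\Res^G_H(S) \cong V_1^{\oplus m}$ where $m = \dim(V_2 \otimes \cdots \otimes V_k)$ is odd. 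The subgroup $H$ need not be $n'$-invariant, but $N := \mathrm{soc}(G) = G$ acts, and the $H$-homogeneous components of $\Res^G_H(S)$ are permuted by $G$; arguing exactly as in the proof of Lemma~\ref{lem:3}, there is $g \in G$ with $gn'$ stabilizing the single $V_1$-homogeneous component, which is all of $S$, so $gn' \in N_{\GL(S)}(H)$ has finite order. Since $(L, V_1)$ has the $E1$-property by hypothesis (with $V_1$ non-trivial, irreducible, odd-dimensional), applying Lemma~\ref{lem:2} to the pair $(H, V_1)$ inside $\GL(S)$ — using that $S$ is $V_1$-homogeneous as an $\mathbb{R}H$-module — gives that $(H, S, gn')$ has the $E1$-property, whence $(G, S, gn')$ does by Lemma~\ref{lem:1} with the $H$-invariant, $gn'$-invariant submodule taken to be an $\mathbb{R}H$-irreducible constituent of $S$ on which things line up; as $n'$ was arbitrary, $(G,S)$ has the $E1$-property, and then $(G,V)$ does by Lemma~\ref{lem:2}.

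The main obstacle is the bookkeeping around which subgroups are genuinely $n$-invariant: the individual tensor (or diagonal) factors of $L^k$ are \emph{not} characteristic, so one cannot invoke Lemma~\ref{lem:3} verbatim, and the passage from $n$ to a twisted element $gn$ stabilizing a chosen factor must be done carefully so that the $E1$-conclusion transfers back up. A secondary technical point is the descent of the tensor decomposition from $\mathbb{C}$ to $\mathbb{R}$: one must check that $V \cong V_1 \otimes_{\mathbb{R}} \cdots \otimes_{\mathbb{R}} V_k$ over $\mathbb{R}$ with each $V_i$ absolutely irreducible (equivalently, that the Schur index is $1$ throughout), which follows because $V$ is absolutely irreducible by Remark~\ref{rem:0} and hence each factor must be too. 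Once these points are handled, the proof is a routine combination of Lemmas~\ref{lem:1}, \ref{lem:2} and the argument pattern of Lemma~\ref{lem:3}.
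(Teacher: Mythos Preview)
The paper explicitly omits the proof of this proposition (``the following proposition, whose proof we omit''), so there is nothing to compare against; I can only assess your argument on its own merits.

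Your identification of the tensor decomposition $V \cong V_1 \otimes_{\mathbb{R}} \cdots \otimes_{\mathbb{R}} V_k$ with each $V_i$ absolutely irreducible, non-trivial and odd-dimensional (after passing to the faithful quotient, which is again a direct power of~$L$) is fine, as is the observation that the individual factors $L_i$ are not characteristic. The gap is in the paragraph that follows. You write that, arguing as in Lemma~\ref{lem:3}, there is $g \in G$ with $gn'$ stabilizing the $V_1$-homogeneous component of $\Res^G_H(S)$, ``which is all of~$S$, so $gn' \in N_{\GL(S)}(H)$''. But this is a non-sequitur: the $V_1$-homogeneous component being all of~$S$ makes the stabilization condition vacuous, and stabilizing a subspace says nothing about normalizing a subgroup. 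More seriously, no such $g$ can exist in general. The automorphism~$\nu$ of $G = L^k$ induced by~$n$ permutes the simple direct factors $L_1,\dots,L_k$ (these are precisely the minimal normal subgroups), while every inner automorphism $\ad_g$ fixes each $L_i$ setwise since $L_i \unlhd G$. Hence $\ad_g \circ \nu$ permutes the factors exactly as $\nu$ does, for every $g \in G$; if $\nu(L_1) \neq L_1$ then no element of the coset $Gn$ normalizes $L_1$, and your restriction to $H = L_1$ cannot be carried out.

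What is genuinely needed is to take the permutation action of~$\nu$ on the set $\{L_1,\dots,L_k\}$ into account. A natural route is to group the factors into $\langle\nu\rangle$-orbits: for each orbit the corresponding partial product $H_j = \prod_{i \in O_j} L_i$ \emph{is} $n$-invariant, and the tensor decomposition of~$V$ refines accordingly, with $n$ acting (up to a scalar, by absolute irreducibility) as a tensor product of operators $n_j \in \GL(W_j)$ of finite order normalizing~$H_j$. This reduces the problem to the transitive case, where~$\nu$ cycles the $k$ factors; that case requires a separate argument (for instance, relating eigenvalues of $gn$ on $V_1^{\otimes k}$ to eigenvalues of a suitable element on~$V_1$, where the $E1$-property of~$L$ can finally be invoked). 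Your current write-up does not address this, and the appeal to Lemmas~\ref{lem:1}--\ref{lem:3} alone is not enough to bridge the gap.
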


\begin{corollary}
\label{cor:2}
A minimal counterexample to {\rm Conjecture~\ref{conj:1}} is a non-abelian 
finite simple group.
\end{corollary}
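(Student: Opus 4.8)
The plan is to induct on the group order. Let $G$ be a minimal counterexample to Conjecture~\ref{conj:1}; thus $G$ fails the $E1$-property, but every group of smaller order has it. Our task is to rule out everything except the non-abelian finite simple groups. By the definition of the $E1$-property, there is a non-trivial, irreducible, odd-dimensional $\mathbb{R}G$-module $V$, and an element $n \in \GL(V)$ of finite order normalizing $\rho(G)$, such that no $\rho(g)n$ has eigenvalue $1$. As in Subsection~\ref{RestrictionMethod}, we may assume $\rho$ is faithful and identify $G$ with $\rho(G) \leq \GL(V)$.

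First I would dispose of the abelian case. If $G$ is abelian, then since $V$ is irreducible over $\mathbb{R}$ and absolutely irreducible (Remark~\ref{rem:0}) it is one-dimensional, so $G$ embeds into $\mathbb{R}^\times = \GL(V)$ and hence $\rho(G) \leq \{\pm 1\}$. As $V$ is non-trivial, $\rho(G) = \{\pm 1\}$, and the only finite-order elements of $\GL(V)$ are $\pm 1$, each of which has $1$ as an eigenvalue in some product with an element of $\rho(G)$ (indeed $\rho(g)n$ with $\rho(g) = n^{-1}$ is the identity). So an abelian group cannot be a minimal counterexample.

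Next I would show $G$ is characteristically simple. Suppose not; let $1 \neq H \lneq G$ be a proper non-trivial characteristic subgroup of $G$. Since $V$ is irreducible, pick an irreducible $\mathbb{R}H$-submodule $S \leq V$. If $S$ were the trivial $\mathbb{R}H$-module, then the sum of all $H$-fixed points in $V$ would be a non-zero $G$-submodule (as $H \unlhd G$), hence all of $V$ by irreducibility, forcing $H \leq \ker\rho = 1$, a contradiction; so $S$ is non-trivial. If $S$ is even-dimensional one argues separately --- here the key point, supplied by the surrounding machinery, is that oddness of $\dim V$ together with transitivity of $G$ on the homogeneous components of $\Res^G_H(V)$ forces the $S$-homogeneous component to have odd dimension, and then $S$ itself can be taken odd-dimensional after passing to a suitable component; granting this, $H$ has the $E1$-property by minimality (as $|H| < |G|$), and then Lemma~\ref{lem:3} gives that $(G,V)$ has the $E1$-property, contradicting the choice of $V$. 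Hence $G$ has no proper non-trivial characteristic subgroup, i.e.\ $G$ is characteristically simple.

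Finally, a characteristically simple finite group is a direct power $L \times \cdots \times L$ of a simple group $L$. If $L$ is abelian then $G$ is abelian, already excluded. If $L$ is non-abelian, then by minimality $L$ has the $E1$-property (since $|L| \leq |G|$, with equality only when $G = L$ is already simple), and Proposition~\ref{prop:1} shows $G$ has the $E1$-property --- contradiction --- unless $G = L$ itself. Therefore the minimal counterexample $G$ must be a non-abelian finite simple group, as claimed. The main obstacle in this argument is the bookkeeping with dimension parities in the characteristic-subgroup step: one must ensure that when restricting to $H$ and passing to homogeneous components one can always land on an \emph{odd-dimensional}, \emph{non-trivial} irreducible $\mathbb{R}H$-module, so that the inductive hypothesis for $H$ actually applies; Lemmas~\ref{lem:1}--\ref{lem:3} are precisely designed to handle this, with the oddness of $\dim V$ propagating to the relevant component because $G$ permutes the homogeneous components transitively.
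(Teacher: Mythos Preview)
Your argument follows the paper's proof: reduce to the characteristically simple case via Lemma~\ref{lem:3}, rule out the abelian case (the paper simply cites Example~\ref{exa:2}, while you argue it directly), and invoke Proposition~\ref{prop:1} to conclude simplicity. One clarification on the point you flag as an obstacle: the hedging about $\dim(S)$ possibly being even is unnecessary. Since $G$ permutes the $H$-homogeneous components of $\Res^G_H(V)$ transitively, each such component has dimension dividing $\dim(V)$ and hence is odd; the $S$-homogeneous component is a direct sum of copies of $S$, so $\dim(S)$ divides that odd number and is itself odd. Thus \emph{every} irreducible $\mathbb{R}H$-submodule of $V$ is automatically odd-dimensional, and together with your observation that faithfulness of $\rho$ forces $S$ to be non-trivial, the inductive hypothesis on $H$ applies without any ``passing to a suitable component.''
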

\begin{proof} 
Let~$G$ be a minimal counterexample. If $H \lneq G$ is characteristic,~$H$ has 
the $E1$-property by hypothesis. Then~$G$ has the $E1$-property by 
Lemma~\ref{lem:3}.

Thus~$G$ is characteristically simple. But~$G$ is non-abelian by 
Example~\ref{exa:2}. Hence~$G$ is simple by Proposition~\ref{prop:1}.
\end{proof}

\begin{corollary}
\label{cor:3}
A solvable group has the $E1$-property.
\end{corollary}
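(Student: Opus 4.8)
The plan is to run the minimal-counterexample argument of Corollary~\ref{cor:2} inside the class of solvable groups. Assume for a contradiction that some solvable group fails to have the $E1$-property, and let $G$ be one of smallest order; note $|G|\geq 2$, since the trivial group has the $E1$-property vacuously. Every subgroup of a solvable group is again solvable, so by minimality every solvable group of order smaller than $|G|$ has the $E1$-property, and this is the inductive input we will use.

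First I would show that $G$ is characteristically simple. Let $V$ be an irreducible, non-trivial, odd-dimensional $\mathbb{R}G$-module; as in Subsection~\ref{RestrictionMethod} we may assume the representation $\rho\colon G\to\GL(V)$ faithful. Suppose $1\neq H\lneq G$ is a proper characteristic subgroup, and let $S$ be an irreducible $\mathbb{R}H$-submodule of $V$. The homogeneity-and-parity argument already in the proof of Lemma~\ref{lem:3} shows that the $S$-homogeneous component of $\Res^G_H(V)$ has odd dimension, and hence so does $S$. Moreover $S$ is non-trivial: if $H$ acted trivially on $S$, then the $H$-fixed subspace of $V$ would be a non-zero $G$-submodule, hence all of $V$, forcing $H$ to act trivially on $V$, against faithfulness and $H\neq 1$. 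Since $H$ is solvable of order less than $|G|$, it has the $E1$-property by minimality, so $(H,S)$ has the $E1$-property, and Lemma~\ref{lem:3} yields that $(G,V)$ has the $E1$-property. As $V$ was arbitrary, $G$ has the $E1$-property, a contradiction. Therefore $G$ has no proper non-trivial characteristic subgroup.

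It remains to rule out solvable characteristically simple groups. A finite characteristically simple group is a direct product of copies of a single simple group, and when the ambient group is solvable that simple group is cyclic of prime order; hence $G$ is an elementary abelian $p$-group. But elementary abelian $p$-groups have the $E1$-property by Example~\ref{exa:2}, contradicting the choice of $G$. This completes the proof. The only point demanding care is the verification that the $\mathbb{R}H$-submodule $S$ entering the application of Lemma~\ref{lem:3} is genuinely non-trivial and of odd dimension, so that the inductive hypothesis applies to $(H,S)$; both facts follow from the faithfulness reduction together with the transitivity of the $G$-action on the homogeneous components of $\Res^G_H(V)$.
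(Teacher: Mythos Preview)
Your proof is correct and matches the paper's intended argument: Corollary~\ref{cor:3} is stated there without proof, being an immediate consequence of running the minimal-counterexample argument of Corollary~\ref{cor:2} inside the class of solvable groups, exactly as you do. Your explicit verification that the $\mathbb{R}H$-submodule~$S$ is non-trivial and of odd dimension (so that the $E1$-property of~$H$ actually yields the $E1$-property of $(H,S)$ needed in Lemma~\ref{lem:3}) fills in a detail the paper leaves implicit.
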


\subsection{On the structure of the problem}
\label{FirstObservations}
Here, we will present some elementary observations which help to identify the
structure of the problem, and which prepare for a further method to be
introduced below. 

We return to the hypotheses of Subsection~\ref{RestrictionMethod}. Thus
$G \leq \GL(V)$ is a finite group, where~$V$ is a non-trivial 
$\mathbb{R}G$-module of odd dimension. Moreover, $n \in N_{\GL(V)}(G)$ is an 
element of finite order. In addition, we assume that $V$ is irreducible. 
Recall from Remark~\ref{rem:0} that~$V$ is absolutely irreducible.

With these notations, we note the following easy facts.

\begin{lemma}
\label{lem:4}
The following statements hold.
\begin{itemize}
\item We have $C_{\GL(V)}(G) = \{ x \cdot \id_V \mid x \in \mathbb{R} \}$, and 
$N_{\SL(V)}(G)$ embeds into $\Aut(G)$.

\item The set of elements of finite order in $N_{\GL(V)}(G)$
equals $N_{\SL(V)}(G) \times \langle -\id_V \rangle$.

\item If $\det(n) = 1$, then $n$ has eigenvalue $1$. \hfill{$\Box$}
\end{itemize}
\end{lemma}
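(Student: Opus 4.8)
The plan is to treat the three assertions in turn, exploiting that $V$ is absolutely irreducible (Remark~\ref{rem:0}).

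\medskip
\textbf{First assertion.} For the centralizer statement, I would invoke Schur's lemma: since $V$ is an absolutely irreducible $\mathbb{R}G$-module, $\End_{\mathbb{R}G}(V) = \mathbb{R}\cdot\id_V$, and the invertible such endomorphisms are exactly the $x\cdot\id_V$ with $x \in \mathbb{R}^*$ (the scalar $0$ being excluded by invertibility, though $C_{\GL(V)}(G)$ as written includes only invertible elements — so strictly $C_{\GL(V)}(G) = \{x\cdot\id_V \mid x \in \mathbb{R}^*\}$, which I would state precisely). For the embedding $N_{\SL(V)}(G)\hookrightarrow\Aut(G)$: conjugation gives a homomorphism $N_{\GL(V)}(G)\to\Aut(G)$ whose kernel is $C_{\GL(V)}(G) = \mathbb{R}^*\cdot\id_V$. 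Intersecting with $\SL(V)$, the kernel becomes $\{x\cdot\id_V : x^{\dim V} = 1\}$; since $\dim V$ is odd, the only real such $x$ is $x = 1$, so the map restricted to $N_{\SL(V)}(G)$ is injective.

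\medskip
\textbf{Second assertion.} Let $n \in N_{\GL(V)}(G)$ have finite order, say $n^k = \id_V$. Then $\det(n)$ is a real $k$-th root of unity, hence $\det(n) \in \{\pm 1\}$. If $\det(n) = 1$ then $n \in N_{\SL(V)}(G)$. If $\det(n) = -1$, then $\det(-\id_V) = (-1)^{\dim V} = -1$ since $\dim V$ is odd, so $n\cdot(-\id_V)^{-1} = -n$ has determinant $1$ and still normalizes $G$ and has finite order, i.e.\ $-n \in N_{\SL(V)}(G)$; thus $n = (-n)\cdot(-\id_V) \in N_{\SL(V)}(G)\cdot\langle-\id_V\rangle$. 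Since $-\id_V$ is central of order $2$ and, by the previous paragraph, $-\id_V \notin N_{\SL(V)}(G)$ (its determinant is $-1$), the product is direct: $N_{\SL(V)}(G)\times\langle-\id_V\rangle$. Conversely every element of this set has finite order (a product of a finite-order element of $\SL(V)$ with the central involution $-\id_V$), giving the claimed equality.

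\medskip
\textbf{Third assertion.} Suppose $\det(n) = 1$. Since $n$ has finite order, it is diagonalizable over $\mathbb{C}$ with eigenvalues roots of unity; the non-real ones occur in complex-conjugate pairs (as $n$ is a real matrix), each such pair contributing product $1$ to the determinant, while the real eigenvalues are each $\pm 1$. Hence $\det(n) = (-1)^{m}$ where $m$ is the multiplicity of the eigenvalue $-1$. As $\det(n) = 1$, $m$ is even; but $\dim V$ is odd, so the total number of eigenvalues equal to $\pm 1$ has the same parity as the number of real eigenvalues, and $\dim V - (\text{number of non-real eigenvalues})$ is odd minus even, hence odd. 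Therefore the multiplicity of $+1$ equals (odd number) $- m$ with $m$ even, which is odd, in particular positive; so $1$ is an eigenvalue of $n$.

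\medskip
I expect no serious obstacle here: all three parts are short applications of Schur's lemma and parity bookkeeping on determinants and eigenvalue multiplicities. The only point requiring a little care is keeping track of where the hypothesis ``$\dim V$ odd'' is used — it enters in the second and third assertions (to force $-\id_V \notin \SL(V)$ and to force an odd number of real eigenvalues), and the argument would genuinely fail without it.
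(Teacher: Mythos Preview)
The paper does not give a proof of this lemma; it is stated as ``easy facts'' with only a \(\Box\). Your argument is correct and is exactly the standard one. Two small remarks: in the converse direction of the second assertion you should say explicitly why every element of \(N_{\SL(V)}(G)\) has finite order --- this follows because, by your first assertion, \(N_{\SL(V)}(G)\) embeds into \(\Aut(G)\), which is finite since \(G\) is finite. Also, your closing comment that the oddness of \(\dim V\) enters only in the second and third assertions is not quite right: you used it in the first assertion as well, to conclude that the only real \(x\) with \(x^{\dim V}=1\) is \(x=1\), which is precisely what makes the embedding \(N_{\SL(V)}(G)\hookrightarrow\Aut(G)\) injective.
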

The first item of this lemma implies that 
$Z( G ) \leq \langle \pm \id_V \rangle$. 
With respect to 
the last item of this lemma, we emphasize that $-n$ is an element of finite 
order normalizing~$G$, and $\det(-n) = -1$, as $\dim(V)$ is odd.
In particular, if $-\id_V \in G$, then $(G,V,n)$ has the $E1$-property.
So let us assume that $-\id_V \not\in G$ in the following.
Then~$Z(G)$ is trivial.

Let us introduce further notation. For $g \in N_{\GL(V)}( G )$, we write 
$\ad_g \in \Aut(G)$ for the automorphism induced by conjugation with~$g$, 
and put $\nu := \ad_n$. Notice that $\nu = \ad_{-n}$. As in the proof of 
Lemma~\ref{lem:2}, set $A := \langle G, n \rangle \leq \GL(V)$. Finally, let 
$A_1 := A \cap \SL(V)$. We distinguish two cases:

\begin{description}
\item[\textit{Case~$1$}:] We have $-\id_V \not\in A$.

\item[\textit{Case~$2$}:] We have $-\id_V \in A$. 
\end{description}

\noindent
Notice that $A = A_1 \times \langle -\id_V \rangle$ in Case~$2$. It is helpful 
to take a more abstract point of view.
Put 
$$A' := \begin{cases} A, & \text{in Case\ } 1, \\ A_1, & \text{in Case\ } 2, 
\end{cases}$$
and 
$$G' := \langle \Inn(G), \nu \rangle \leq \Aut(G).$$

\begin{remark}
\label{rem:1}
There is a surjective homomorphism 
$$\rho' \colon A \rightarrow G'$$
with
$$
gn^i \mapsto \ad_g \circ \nu^i \text{\ for\ } g \in G \text{\ and\ } 
i \in \mathbb{Z}. 
$$
Moreover,~$\rho'$ restricts to an isomorphism $A' \rightarrow G'$. \hfill{$\Box$}
\end{remark}

Let $\chi \in \Irr(G)$ and $\chi' \in \Irr(A)$ denote the irreducible characters 
of~$G$, respectively~$A$, afforded by~$V$. We also write~$\chi'$ for the 
restriction of~$\chi'$ to~$A'$. 
The isomorphism $(\rho'|_{A'})^{-1} \colon G' \rightarrow A'$ from Remark~\ref{rem:1}
makes~$V$ into an $\mathbb{R}G'$-module, and, by a slight abuse of notation, we
also let~$\chi'$ denote the character of~$G'$ afforded by~$V$. Thus 
$\chi'( \rho'( a' ) ) = \chi'( a' )$ for all $a' \in A'$.

\subsection{The large degree method} 
\label{LargeDegreeMethod}
Keep the hypotheses and notation of Subsection~\ref{FirstObservations}. 
Notice that a cyclic group of even order contains exactly two real absolutely
irreducible characters.
  
\begin{proposition}
\label{prop:2}
Suppose there is $g \in G$ such that $\alpha := \ad_g \circ \nu$ has even
order and $(\Res^{G'}_{\langle \alpha \rangle}( \chi' ),\lambda) > 0$ for
every real $\lambda \in \Irr( \langle \alpha \rangle )$.
Then $(G,V,n)$ has the $E1$-property.
\end{proposition}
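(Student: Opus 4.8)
The plan is to reduce to the cyclic subgroup $\langle\alpha\rangle$ of $G'$ and apply the restriction method (Lemma~\ref{lem:1}) after lifting $\alpha$ to $\GL(V)$. First I would observe that the element $gn \in A$ satisfies $\rho'(gn) = \ad_g \circ \nu = \alpha$, and by Remark~\ref{rem:1} the preimage of $\langle\alpha\rangle$ under $\rho'$ inside $A'$ is a cyclic group $C$ mapping isomorphically onto $\langle\alpha\rangle$; since $\alpha$ has even order, so does $C$. Transporting the $\mathbb{R}G'$-module structure on $V$ back to $A'$ (equivalently, restricting $\chi'$ from $A'$ to $C$), the hypothesis $(\Res^{G'}_{\langle\alpha\rangle}(\chi'),\lambda) > 0$ for every real $\lambda \in \Irr(\langle\alpha\rangle)$ says exactly that, as an $\mathbb{R}C$-module, $V$ contains every real absolutely irreducible constituent of the regular representation of $C$; in particular $V$ contains the trivial $\mathbb{R}C$-module, and also the unique other real one-dimensional $\mathbb{R}C$-character, the sign character $\sigma$ of the even cyclic group $C$.

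The key point is then to produce, inside $A$, an actual element with eigenvalue $1$ normalising $G$. I would work with the generator $c \in C \leq A'$ lying over $\alpha$, so that $c = g'n$ or $c = -g'n$ for a suitable $g' \in G$ with $\ad_{g'} = \ad_g$; in either case $c$ (or $-c$, which also normalises $G$ and differs by the central $-\id_V$) is a finite-order element of $\GL(V)$ whose action on $V$ decomposes according to the $\mathbb{R}C$-module structure just described. Since $V$ contains the trivial $\mathbb{R}C$-submodule, the element $c$ itself has eigenvalue $1$ on that summand when $c$ acts as $+\id$ there; if instead it is $-c$ that lies in $A'$ and $c$ acts by $\sigma$ nontrivially, one uses the $\sigma$-isotypic summand, on which $-c$ acts trivially. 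The upshot is that one of $\pm c \in A$ has eigenvalue $1$; writing that element as $hn$ with $h \in G$ (using that $A = \langle G, n\rangle$ and $-c$ differs from $c$ by $-\id_V$, and that $n$ and $-n$ both represent $\nu$ via $\rho'$), we conclude that $(G,V,n)$ has the $E1$-property.

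I expect the main obstacle to be the bookkeeping in Case~$2$, i.e.\ keeping straight whether the lift of $\alpha$ to $A'$ is $gn$ or $-gn$, and correspondingly whether it is the trivial or the sign $\mathbb{R}C$-character that supplies the eigenvalue $1$. Concretely: in Case~$1$ we have $A' = A$, so $c = g_0 n^{k}$ for the appropriate element, and $c$ has finite order equal to that of $\alpha$; but in Case~$2$ the isomorphism $\rho'|_{A'} \colon A' \to G'$ may send $c$ to $\alpha$ with $c = -g_0 n^{k}$, and since $-\id_V$ is central the two one-dimensional real $\mathbb{R}\langle c\rangle$-modules get interchanged. The clean way to absorb this is the remark that $\Res^{G'}_{\langle\alpha\rangle}(\chi')$, which by construction records the $\chi'$-multiplicities and hence (via $\rho'$) the $\chi'|_{A'}$-multiplicities on $C$, contains \emph{both} real linear characters of $C$ by hypothesis; so whichever of $\pm c$ actually lies in $A$, the corresponding isotypic component of $V$ is nonzero and yields the eigenvalue $1$. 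Once this case distinction is packaged correctly, the conclusion follows from Lemma~\ref{lem:1} (applied with $H = G$ and $V_1$ the relevant isotypic summand, or simply by direct inspection of the eigenvalues of $hn$), with no further computation required.
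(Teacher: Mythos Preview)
Your proposal is correct and follows essentially the same approach as the paper: lift $\alpha$ to the cyclic subgroup of $A'$ generated by $gn$ (in Case~1) or by whichever of $\pm gn$ lies in $A'$ (in Case~2), and use that both real linear characters of this even cyclic group occur in $\chi'$ to conclude that $gn$ itself has eigenvalue~$1$. The paper's write-up is slightly crisper---it sets $n_1 \in \{\pm n\}$ with $gn_1 \in A'$, observes $gn_1$ has both eigenvalues $1$ and $-1$, hence $gn \in \{gn_1, -gn_1\}$ has eigenvalue~$1$---but the content is the same; your invocation of Lemma~\ref{lem:1} at the end is unnecessary (the direct eigenvalue inspection you mention parenthetically is all that is needed), and the phrase ``writing that element as $hn$'' is harmless but superfluous since $h = g$ throughout.
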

\begin{proof}
Let~$\rho'$ denote the homomorphism from Remark~\ref{rem:1}. 

Suppose that we are in Case~$1$. Then $\rho'( gn ) = \alpha$.  By hypothesis, 
$\Res^{A'}_{\langle gn \rangle}( \chi' )$ contains the trivial character 
of~$\langle gn \rangle$, and thus~$gn$ has eigenvalue~$1$.

Suppose that we are in Case~$2$. Let $n_1 \in \{ \pm n \}$ such that 
$gn_1 \in A'$. Then $\rho'( gn_1 ) = \alpha$. By hypothesis, 
$\Res^{A'}_{\langle gn_1 \rangle}( \chi' )$ contains each of the two real 
irreducible characters of~$\langle gn_1 \rangle$, and thus~$gn_1$ has 
eigenvalue~$1$ and~$-1$. Hence $gn \in \{ gn_1, -gn_1 \}$ has eigenvalue~$1$.
\end{proof}

\begin{corollary}
\label{cor:4}
Suppose there is $g \in G$ such that $\ad_g \circ \nu$ has order~$2$. Then 
$(G,V,n)$ has the $E1$-property. \hfill{$\Box$}
\end{corollary}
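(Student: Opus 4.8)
The plan is to deduce Corollary~\ref{cor:4} directly from Proposition~\ref{prop:2} by verifying its hypotheses in the special case where $\alpha := \ad_g \circ \nu$ has order exactly~$2$. Setting $\langle \alpha \rangle \cong C_2$, I first note that $C_2$ has precisely two absolutely irreducible characters over~$\mathbb{R}$, namely the trivial character~$1$ and the sign character~$\mathrm{sgn}$; both are real. So the condition ``$(\Res^{G'}_{\langle \alpha \rangle}( \chi' ),\lambda) > 0$ for every real $\lambda \in \Irr(\langle \alpha \rangle)$'' in Proposition~\ref{prop:2} amounts to showing that $\Res^{G'}_{\langle \alpha \rangle}(\chi')$ contains \emph{both} the trivial and the sign character of~$\langle \alpha \rangle$.

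The key observation is a parity/dimension count. Write $d := \dim(V) = \chi'(1)$, which is \emph{odd} by the standing hypothesis on~$V$. Restricting to $\langle \alpha \rangle \cong C_2$, we have $\Res^{G'}_{\langle \alpha \rangle}(\chi') = a \cdot 1 + b \cdot \mathrm{sgn}$ for non-negative integers $a,b$ with $a + b = d$. Since~$d$ is odd, we cannot have $a = 0$ and we cannot have $b = 0$ simultaneously failing — more precisely, $a = 0$ would force $b = d$ odd (possible in principle), so a naive parity argument on~$d$ alone is not quite enough. The real input must be that the element realizing~$\alpha$ in~$A'$ has determinant~$1$: in Case~$1$, $\rho'(gn) = \alpha$ with $gn \in A' = A \leq \SL(V)$ forcing no constraint... hence I should instead argue via Lemma~\ref{lem:4}. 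The clean route: let $s \in A'$ with $\rho'(s) = \alpha$, so $s$ is an element of $\SL(V)$ of order~$2$ (in Case~$1$) or we may choose $s = gn_1 \in A'$ of order dividing... Here $s^2$ acts trivially on~$G'$ but $s$ itself has order~$2$ as an operator provided $\nu^2 = \ad_{g}^{-1}\cdots$; in any case $s^2 \in C_{\GL(V)}(G) \cap A' \leq \{\pm\id_V\} \cap \SL(V)$, and since $\dim V$ is odd, $-\id_V \notin \SL(V)$, so $s^2 = \id_V$. Thus $s$ is an involution in $\SL(V)$, hence $\det(s) = 1$ forces the $(-1)$-eigenspace of~$s$ to be even-dimensional, i.e.\ $b$ is even; since $a + b = d$ is odd, $a$ is odd, so $a \geq 1$, and $b$ even with $a$ odd $\leq d$ does not yet give $b \geq 1$. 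To force $b \geq 1$: note that if $b = 0$ then $s = \id_V$, i.e.\ $\alpha = \id_{G'}$, contradicting that $\alpha$ has order~$2$. Hence $b \geq 2 > 0$ as well. Both characters appear, and Proposition~\ref{prop:2} applies.

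So the proof is short: let $s \in A'$ be the preimage of~$\alpha$ under the isomorphism $\rho'|_{A'}$; argue $s^2 = \id_V$ using that $s^2$ centralizes~$G$, lies in $\SL(V)$, and $\dim V$ is odd; conclude $s$ is an involution of $\SL(V)$ with both eigenvalues $\pm1$ occurring (the $+1$-eigenspace is nonzero since $d$ is odd and the $-1$-eigenspace has even dimension $< d$; the $-1$-eigenspace is nonzero since $s \neq \id_V$ because $\alpha$ has order~$2$); therefore the restriction of $\chi'$ to $\langle\alpha\rangle$ contains every real irreducible character of the cyclic group $\langle\alpha\rangle$ of order~$2$; now invoke Proposition~\ref{prop:2}.

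The main obstacle I anticipate is purely bookkeeping: making the identification ``$\alpha \leftrightarrow s \in A'$'' precise and checking $s^2 = \id_V$ cleanly through the isomorphism $\rho'|_{A'} \colon A' \to G'$ of Remark~\ref{rem:1}, in both Case~$1$ and Case~$2$ uniformly — in Case~$2$ one must pick the correct sign $n_1 \in \{\pm n\}$ so that the relevant group element lands in $A' = A_1$. There is no genuine difficulty, only the need to track signs and the oddness of $\dim V$ carefully; everything else is the elementary eigenvalue analysis of an involution in $\SL(V)$.
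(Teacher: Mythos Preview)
Your approach---deduce the corollary from Proposition~\ref{prop:2} by checking that both real characters of $\langle\alpha\rangle\cong C_2$ occur in $\Res^{G'}_{\langle\alpha\rangle}(\chi')$---is exactly what the paper intends (it marks the corollary as immediate). However, there is a genuine gap in your execution.

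You repeatedly assert that $s\in\SL(V)$ (equivalently, that $A'\leq\SL(V)$) in Case~$1$, and you use $\det(s)=1$ to conclude that the $(-1)$-eigenspace has even dimension, hence $a\geq 1$. This assertion is unjustified and in fact false: in Case~$1$ we only know $-\id_V\notin A$, which does \emph{not} force $A\leq\SL(V)$. For a concrete counterexample take $G=S_4$ acting on its $3$-dimensional reflection representation; then $-\id_V\notin G$ (as $Z(S_4)=1$), yet a transposition $s\in G\leq A'$ satisfies $s^2=\id_V$ and $\det(s)=-1$. So your parity argument for $a\geq 1$ breaks down.

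The repair is immediate and actually simpler than what you wrote. Once you know $s^2=\id_V$, you need only observe that $s\neq\id_V$ (since $\alpha$ has order~$2$) and $s\neq -\id_V$ (since $-\id_V\notin A'$ in \emph{both} cases: in Case~$1$ by definition, in Case~$2$ because $A'=A_1\leq\SL(V)$ and $\det(-\id_V)=-1$). An involution in $\GL(V)$ different from $\pm\id_V$ has both $+1$ and $-1$ as eigenvalues, so $a,b\geq 1$ and Proposition~\ref{prop:2} applies. No determinant argument is needed.

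Incidentally, your derivation of $s^2=\id_V$ via ``$s^2$ centralizes $G$ and lies in $\SL(V)$'' is valid (since $\det(s)\in\{\pm1\}$ forces $\det(s^2)=1$), but the one-line route is cleaner: $\rho'|_{A'}$ is an isomorphism by Remark~\ref{rem:1}, and $\alpha$ has order~$2$, so its preimage $s$ has order~$2$.
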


These observations lead to the \textit{Large Degree Method}, which is 
formulated in the following proposition.

\begin{proposition}
\label{prop:3}
Suppose there is $g \in G$ such that $\alpha := \ad_g \circ \nu$ has even
order and
$$\dim(V) > (|\alpha| - 1)|C_G( \alpha' )|^{1/2}$$
for every $\alpha' \in \langle \alpha \rangle$ of prime order.
Then $(G,V,n)$ has the $E1$-property.
\end{proposition}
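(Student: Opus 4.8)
The plan is to derive the conclusion from Proposition~\ref{prop:2}. Put $C := \langle \alpha \rangle$; it is cyclic of even order $m := |\alpha|$, so, as noted just before Proposition~\ref{prop:2}, it has exactly two real irreducible characters, the trivial one $\mathbf{1}$ and the character $\epsilon$ with $\epsilon(\alpha) = -1$. Hence it suffices to show $(\Res^{G'}_{C}(\chi'),\lambda) > 0$ for $\lambda \in \{\mathbf{1},\epsilon\}$. Since $\chi'$ is real-valued (being afforded by the $\mathbb{R}G'$-module $V$) and $\lambda$ takes values $\pm 1$,
$$m\,(\Res^{G'}_{C}(\chi'),\lambda) = \sum_{j=0}^{m-1}\chi'(\alpha^j)\,\lambda(\alpha^j) \ge \chi'(1) - \sum_{j=1}^{m-1}|\chi'(\alpha^j)| = \dim(V) - \sum_{j=1}^{m-1}|\chi'(\alpha^j)|,$$
so it is enough to prove $\sum_{j=1}^{m-1}|\chi'(\alpha^j)| < \dim(V)$.

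The key point is a sharp estimate for $|\chi'(\alpha^j)|$. I would first note that $\Res^{G'}_{\Inn(G)}(\chi')$ is irreducible: it is afforded by the $\mathbb{R}\Inn(G)$-module $V$, which under the isomorphism $\Inn(G)\cong G$ (available since $Z(G) = 1$) becomes the irreducible $\mathbb{R}G$-module $V$, possibly twisted by a linear character of $G$. Then I would use the following elementary lemma: if $Y \unlhd X$ with $X/Y$ cyclic and $\psi \in \Irr(X)$ restricts irreducibly to $Y$, then $|\psi(x)|^2 \le |C_Y(x)|$ for every $x \in X$. This follows from Gallagher's theorem, which gives that the $[X:Y]$ characters $\psi\mu$, $\mu \in \Irr(X/Y)$, are pairwise distinct irreducible characters of $X$; column orthogonality in $X$ then yields $|\psi(x)|^2\,[X:Y] = \sum_\mu |\psi(x)\mu(x)|^2 \le |C_X(x)| \le |C_Y(x)|\,[X:Y]$. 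Applying the lemma with $X = G'$, $Y = \Inn(G)$, $\psi = \chi'$ and $x = \alpha^j$, and identifying $C_{\Inn(G)}(\alpha^j)$ with the fixed-point subgroup of $\alpha^j$ on $G$, we obtain $|\chi'(\alpha^j)|^2 \le |C_G(\alpha^j)|$.

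To finish, fix $j$ with $1 \le j \le m-1$, so $\alpha^j \ne 1$. Pick a prime $p$ dividing the order of $\alpha^j$ and set $\alpha'_j := (\alpha^j)^{|\alpha^j|/p} \in C$, an element of order $p$; as $p$ divides $|\alpha^j|$, which divides $|\alpha|$, this is a prime-order element of $\langle\alpha\rangle$ of the kind appearing in the hypothesis. Since $\alpha'_j$ is a power of $\alpha^j$, we have $C_G(\alpha^j) \le C_G(\alpha'_j)$, hence $|\chi'(\alpha^j)| \le |C_G(\alpha'_j)|^{1/2} < \dim(V)/(m-1)$ by hypothesis. Summing over $j = 1,\dots,m-1$ gives $\sum_{j=1}^{m-1}|\chi'(\alpha^j)| < \dim(V)$, whence $(\Res^{G'}_{C}(\chi'),\lambda) > 0$ for $\lambda \in \{\mathbf{1},\epsilon\}$, and Proposition~\ref{prop:2} applies.

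I expect the only delicate step to be the sharp bound $|\chi'(\alpha^j)|^2 \le |C_G(\alpha^j)|$. The obvious estimate coming from column orthogonality directly in $G'$, namely $|\chi'(\alpha^j)|^2 \le |C_{G'}(\alpha^j)|$, is off by a factor of $[G':\Inn(G)]$, which the degree hypothesis is not strong enough to absorb; one genuinely needs to exploit that $\chi'$ restricts irreducibly to $\Inn(G)$ in order to remove this factor. The remaining steps are routine bookkeeping.
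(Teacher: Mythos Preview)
Your proof is correct and follows the paper's approach: bound $|\chi'(\alpha^j)|$ by $|C_G(\alpha^j)|^{1/2}$, pass to a prime-order power of~$\alpha^j$ to invoke the hypothesis, and conclude via Proposition~\ref{prop:2}. The paper records the key bound as a one-line consequence of ``the second orthogonality relation''; your Gallagher argument is precisely what justifies that phrase, and your final remark correctly identifies why the naive estimate $|\chi'(\alpha^j)|^2 \le |C_{G'}(\alpha^j)|$ would not suffice.
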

\begin{proof}
The second orthogonality relation implies
$|\chi'( \alpha' )| \leq |C_G( \alpha' )|^{1/2}$ for every nontrivial
$\alpha' \in \langle \alpha \rangle$.
Our hypothesis implies 
$(\Res^{G'}_{\langle \alpha \rangle}( \chi' ),\lambda) > 0$ for every
$\lambda \in \Irr( \langle \alpha \rangle )$.
Now use Proposition~\ref{prop:2}.
\end{proof}

\begin{corollary}
\label{cor:5}
Let $G$ b one of the following simple groups:
\begin{itemize}
\item the Tits group;
\item a sporadic simple group;
\item an alternating group $A_n$ with $n \geq 5$ and $n \neq 6$.
\end{itemize}
Then $G$ has the $E1$-property.
\end{corollary}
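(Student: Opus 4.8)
\textbf{Proof proposal for Corollary~\ref{cor:5}.}

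The plan is to verify, for each of the three families, the hypothesis of the Large Degree Method (Proposition~\ref{prop:3}): for every relevant faithful module~$V$ and every $n \in N_{\GL(V)}(G)$ of finite order, one must exhibit $g \in G$ with $\alpha := \ad_g \circ \nu$ of even order satisfying $\dim(V) > (|\alpha|-1)\,|C_G(\alpha')|^{1/2}$ for all $\alpha' \in \langle\alpha\rangle$ of prime order. Since $V$ is absolutely irreducible (Remark~\ref{rem:0}) of odd dimension, and since $-\id_V \notin G$ may be assumed, $Z(G)$ is trivial; for the simple groups in question $Z(G)=1$ anyway. Recall $G' = \langle \Inn(G), \nu\rangle \leq \Aut(G)$, so $\nu$ is an automorphism of~$G$; the coset $\Inn(G)\nu$ in $\Out(G)$ is what we must control.

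First I would deal with the case $\nu \in \Inn(G)$, i.e.\ $\rho$ extends to~$A$ only through inner automorphisms: then $\nu = \ad_{g_0}$ for some $g_0 \in G$, and it suffices to find $g \in G$ with $\ad_{g^{-1}g_0}$ — equivalently, an element of~$G$ — of even order; taking any involution $t \in G$ and $g = t g_0^{-1}$ gives $\alpha = \ad_t$ of order~$2$, and Corollary~\ref{cor:4} finishes this case. The substantial case is $\nu \notin \Inn(G)$. For the sporadic groups and the Tits group, $\Out(G)$ is trivial or of order~$2$; in the order-$2$ case $\nu$ lies in the nontrivial coset, and I would search within that coset for an element $\alpha = \ad_g\circ\nu$ of even order with $|\alpha|$ small — ideally $|\alpha| \in \{2,4,6\}$ — minimizing $(|\alpha|-1)\,\max_{\alpha'}|C_G(\alpha')|^{1/2}$, then compare against $\dim(V)$. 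The key input is that the smallest faithful absolutely irreducible degree of these groups (and of their automorphic extensions, since $V$ must admit an action of $A$) is large relative to centralizer orders of elements in the outer coset: ATLAS data on character degrees and on centralizers of outer elements, together with the classification of extensions, supplies everything. For $\alpha$ of order~$2$ one only needs $\dim(V) > |C_G(\alpha)|^{1/2}$ with $\alpha$ an outer involution — almost always true — and Corollary~\ref{cor:4} applies directly when an inner-coset element of $\langle \Inn(G),\nu\rangle$ of order~$2$ exists.

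For the alternating groups $A_n$ ($n\geq 5$, $n\neq 6$) I would argue uniformly. Here $\Out(A_n) = C_2$ (generated by conjugation by a transposition in $S_n$), so $A = \langle G, n\rangle$ with $G'\leq S_n$; thus $\alpha = \ad_g\circ\nu$ is realized by conjugation by some element $\sigma \in S_n$, and we may choose $\sigma$ to be a fixed-point-free involution when $n$ is even (a product of $n/2$ transpositions) or an involution moving all but one point when $n$ is odd, giving $|\alpha| = 2$ provided $\sigma \notin A_n$ exactly when needed — and when $\nu$ is inner we instead pick $\sigma$ an even involution, again $|\alpha|=2$. In the order-$2$ case the bound reads $\dim(V) > |C_G(\alpha')|^{1/2}$ for $\alpha'$ the unique prime-order element $\alpha$ itself; since the minimal nontrivial irreducible real degree of $A_n$ is $n-1$ (the standard reflection module) while $|C_{S_n}(\sigma)|^{1/2} \le (2^{\lfloor n/2\rfloor}\lfloor n/2\rfloor!)^{1/2}$, this inequality can fail for small degrees, so I would not rely on it blindly but rather invoke Corollary~\ref{cor:4}, which needs no degree bound at all: it suffices that $\langle \Inn(A_n),\nu\rangle$ contains an element inducing an order-$2$ automorphism, and $S_n$ always contains an involution in the relevant $\Out$-coset whose square is trivial — an even involution if $\nu$ is inner, an odd involution if not. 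The existence of such involutions is immediate for $n\geq 4$. Hence $(G,V,n)$ has the $E1$-property for every odd-dimensional nontrivial irreducible~$V$.

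The main obstacle I anticipate is not the alternating groups — there Corollary~\ref{cor:4} makes the argument essentially bookkeeping about parities of permutations — but rather a handful of small sporadic groups, or small~$n$ (such as $A_5, A_7, A_8$), where the minimal faithful degree is small enough that the crude Large Degree bound in Proposition~\ref{prop:3} genuinely fails for every available $\alpha$ in the outer coset, and where $\langle \Inn(G),\nu\rangle$ might fail to contain a suitable order-$2$ element. For those finitely many exceptions one must fall back on Proposition~\ref{prop:2} directly, checking by explicit character restriction (from ATLAS character tables of $G$ and $G.2$) that some $\alpha$ of even order has $(\Res^{G'}_{\langle\alpha\rangle}(\chi'),\lambda)>0$ for every real $\lambda$; this is exactly the computational step the paper flags, carried out in Chevie and GAP. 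I would organize the write-up so that the generic bound disposes of all but an explicit short list, and then cite the computation for that list.
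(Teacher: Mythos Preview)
Your proposal would ultimately succeed, but it is far more elaborate than the paper's argument, and the extra machinery you anticipate is never needed. The paper's entire proof is: for each group $G$ on the list one has $\Aut(G) = \Inn(G) \rtimes \Phi$ with $|\Phi| \leq 2$, and hence Corollary~\ref{cor:4} applies. Concretely, if $\nu \in \Inn(G)$ then $\{\ad_g \circ \nu : g \in G\} = \Inn(G) \cong G$ contains an involution since $|G|$ is even; if $\nu \notin \Inn(G)$ then the nontrivial generator of~$\Phi$ lies in the coset $\Inn(G)\nu$ and has order~$2$. Either way one obtains $\alpha = \ad_g \circ \nu$ of order~$2$, and Corollary~\ref{cor:4} finishes with no degree estimate and no computation.

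You do give essentially this argument for the alternating groups, so you have the right idea there. The overreach is in your treatment of the sporadic groups and the Tits group: you propose Proposition~\ref{prop:3} as the primary tool, hedge on whether an order-$2$ element exists in the outer coset, and forecast a computational cleanup via Proposition~\ref{prop:2} for small cases. All of this is unnecessary, because the splitting $\Aut(G) = \Inn(G) \rtimes C_2$ is a known fact for every sporadic group with nontrivial outer automorphism group and for the Tits group (ATLAS). So the order-$2$ outer automorphism always exists, Corollary~\ref{cor:4} always fires, and there are no exceptional small cases at all. The Large Degree Method and the Chevie/GAP computations are genuinely needed later in the paper for groups of Lie type, precisely because those can have $|\Out(G)| > 2$; they play no role for the groups in Corollary~\ref{cor:5}.
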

\begin{proof} 
In these cases, $\Aut(G) = \Inn(G) \rtimes \Phi$ with $|\Phi| \leq 2$. The claim 
follows from Corollary~\ref{cor:4}.
\end{proof}
The group $A_6$ omitted here  will be treated as the Chevalley group $\PSL_2(9)$.

\begin{example}
\label{exa:4}
Let $G = G_2(q)$, the simple Chevalley group type~$G_2$ with $q = 3^f$, and 
let~$V$ be the the Steinberg module of~$G$ over~$\mathbb{R}$. 

Then $|G| = q^6(q^2-1)(q^6-1)$ and $\dim(V) = q^6$. It is known that 
$\Aut(G) = \Inn(G) \rtimes \Phi$, where $\Phi$ is cyclic of order $2f$.
(This is the reason for taking $q$ to be a $3$-power, as otherwise
$\Phi$ is cyclic of order $f$ and the arguments become easier.)
There is $h \in G$ such that $\ad_h \circ \nu = \mu \in \Phi$.

If $|\mu|$ is even, put $\alpha := \mu$. Otherwise, let $u \in G$ be a
$\mu$-stable involution, and put $\alpha := \ad_u \circ \mu = \ad_{uh} 
\circ \nu$. Such a $\mu$-stable involution exists, as the set of $\mu$-fixed
points in~$G$ is a Chevalley group of type $G_2$ or a twisted group
${^3G}_2( 3^{2m+1} )$ for some non-negative integer~$m$;
see \cite[Proposition~$4.9.1$(a)]{GLS}.

Then $|\alpha|$ is even, $|\alpha| \leq 2f$, and $|C_G( \alpha' )| \leq q^7$
for every $\alpha' \in \langle \alpha \rangle$ of prime order. This follows
from the known fixed point groups of elements of $\Aut(G)$; see
\cite[Propositions $4.9.1$, $4.9.2$]{GLS}. Since
$$\dim(V) = q^6 > (|\alpha| - 1)q^{7/2},$$
$(G,V)$ has the $E1$-property.
\end{example}

\subsection{The simple groups of Lie type}
\label{SimpleGroupsOfLieType}
By Corollary~\ref{cor:5} and the classification of the finite simple groups, we 
have to rule out the simple groups of Lie type as minimal counterexamples to 
Conjecture~\ref{conj:1}. Here is a list of these groups, omitting the conditions
on simplicity:

\begin{itemize}
\item \textit{classical groups}: $\PSL_{d}(q)$, $\PSU_{d}(q)$, $P\Omega_{2d+1}(q)$,
$\PSp_{2d}(q)$, $P\Omega^+_{2d}(q)$, $P\Omega^-_{2d}(q)$;

\item \textit{exceptional Chevalley groups}: $G_2(q)$, $F_4(q)$, $E_6(q)$, $E_7(q)$, $E_8(q)$;

\item \textit{twisted groups}: ${^3\!D}_4(q)$, ${^2\!E}_6(q)$.
\end{itemize}

In all these cases,~$q$ is a power of a prime $r$, the \textit{characteristic} 
of~$G$. We further have the following series of groups:

\begin{itemize}

\item \textit{Ree groups}: ${^2G}_2(3^{2m+1})$, ${^2\!F}_4(2^{2m+1})$;

\item \textit{Suzuki groups}: ${^2\!B}_2(2^{2m+1})$.
\end{itemize}

\subsection{Groups of Lie type of odd characteristic}
\label{OddCharacteristic}
Let~$G$ be a finite group of Lie type, of odd characteristic, i.e.~$r$ is odd in 
the notation of Subsection~\ref{SimpleGroupsOfLieType}, or~$G$ is a Ree group 
${^2G}_2(3^{2m+1})$. Let $r = 3$ in the latter case. Then~$G$ is a group with 
a split $BN$-pair of characteristic~$r$; see, e.g.,~\cite[Subsection $2.5$]{C2}.

In particular, there are distinguished subgroups $B$, $U$ and $T$, with
$B = U \rtimes T$, where $U = O_r( B )$. The groups~$B$ and~$T$ are the
\textit{standard Borel subgroup}, respectively the \textit{standard maximal 
torus} of~$G$.
In classical groups,~$B$ arises from the group of upper triangular matrices,
and~$T$ from the group of diagonal matrices. More generally, a group~$P$ with
$B \leq P$ is a \textit{standard parabolic subgroup}. This has a \textit{Levi 
decomposition} $P = O_r( P ) \rtimes L$, where~$L$ is a \textit{standard Levi 
subgroup} of~$G$, and~$O_r( P )$ is the \textit{unipotent radical} of~$P$. The 
following lemma is the main tool to deal with the groups of Lie type of odd
characteristic. It will be applied to the character $\chi := \chi_V$ of the 
$\mathbb{R}G$-module~$V$ under consideration; see the introduction to 
Subsection~\ref{RestrictionMethod}. Notice that~$\chi$ is real valued, 
non-trivial and of odd degree.

\begin{lemma}
\label{lem:5}
Let $P = O_r( P ) \rtimes L$ be a standard parabolic subgroup of~$G$ with 
standard Levi subgroup~$L$. Let $\chi \in \Irr( G )$ with~$\chi$ real 
and~$\chi(1)$ odd. Then there exists $\lambda \in \Irr(P)$ such that
the following conditions hold.

\begin{itemize}
\item The unipotent radical $O_r( P )$ is in the kernel of~$\lambda$ 
(i.e.\ $\lambda \in \Irr(L)$).
\item The scalar product $(\lambda, \Res^G_P( \chi ) )$ is odd.
\item If $P = B$, i.e.\ $L = T$, then $\lambda^2 = 1_T$ (in the character group 
of $T$).
\end{itemize}
\end{lemma}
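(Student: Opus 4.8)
The plan is to reduce the statement to an elementary parity argument about the permutation action of $P$ on the Borel-coset space and then, for the case $P=B$, to upgrade from "$\lambda$ restricts from a character of $T$ with odd multiplicity'' to "$\lambda^2 = 1_T$'' using the fact that $\chi$ is real. First I would observe that since $O_r(P)$ is a normal $r$-subgroup of $P$ and $r$ is odd, every irreducible constituent of $\Res^G_P(\chi)$ splits into constituents of $L = P/O_r(P)$-modules plus constituents on which $O_r(P)$ acts nontrivially; the latter come in $L$-orbits whose lengths are divisible by $r$ (hence odd), so modulo $2$ we may ignore them. Thus it suffices to find $\lambda \in \Irr(L)$ (inflated to $P$) with $(\lambda, \Res^G_P(\chi))$ odd, which amounts to showing that $\sum_{\lambda \in \Irr(L)} (\lambda, \Res^G_P(\chi))$ — equivalently the multiplicity of the trivial $O_r(P)$-module in $\Res^G_{O_r(P)}(\chi)$, i.e.\ $\dim V^{O_r(P)}$ — is odd.

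The key step is therefore to show $\dim V^{O_r(P)}$ is odd. Here I would use the standard fact (Curtis–Steinberg, or the $BN$-pair machinery of~\cite{C2}) that for a split $BN$-pair, the fixed-point space of a unipotent radical is governed by the Hecke algebra / the permutation module $\mathbb{R}[G/B]$, and more basically that $\dim V^{U} \leq 1$ for the Steinberg-type components while in general $\dim V^{O_r(P)}$ can be computed from the decomposition of $\mathbb{R}[G/P]$. The cleanest route: $\dim V^{O_r(P)}$ equals the multiplicity of $\chi$ in the permutation character $\mathrm{Ind}_{O_r(P)}^G(1)$, but I actually only need a parity statement, so I would instead argue as in the proof for $P=B$: pair $\chi$ with the permutation character $\pi_P := \mathrm{Ind}_P^G(1_P)$ restricted appropriately — no wait, the right object is $\Res^G_P(\chi)$ paired against the regular-type structure on $L$. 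Concretely, $(\Res^G_P \chi, \lambda)_P$ summed over $\lambda \in \Irr(L)$ (inflated) is $\langle \chi|_{O_r(P)}, 1\rangle = \dim V^{O_r(P)}$, and this number has the same parity as $\langle \chi|_P, \mathrm{Infl}_{O_r(P)}^P(1) \cdot (\text{anything}) \rangle$; I would then invoke that $\langle \chi, \mathrm{Ind}_P^G 1\rangle$ is odd, which is the genuinely nontrivial input and which follows from the theory of the Steinberg character and the fact (traceable to the structure of $\mathbb{R}[G/B]$ and Solomon–Tits) that each unipotent principal-series character, in particular any real odd-degree $\chi$ that can occur, appears in $\mathrm{Ind}_B^G 1$ with odd multiplicity. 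This last point is where I expect to lean hardest on~\cite{GLS} or~\cite{C2}, and it is the main obstacle: controlling the parity of Hecke-algebra multiplicities in $\mathbb{R}[G/B]$ for all the relevant $\chi$.

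For the third bullet ($P=B$, so $L=T$), once I have $\lambda \in \Irr(T)$ with $(\lambda, \Res^G_B \chi)$ odd, I would argue that $\lambda^2 = 1_T$ as follows. The Weyl group $W = N_G(T)/T$ acts on $\Irr(T)$, and by Mackey / the $BN$-pair relations the multiplicity $(\lambda, \Res^G_B \chi)$ depends only on the $W$-orbit of $\lambda$ (up to the combinatorics of the Hecke algebra), and $\Res^G_T(\chi)$ is a $W$-invariant class function on $T$ since $\chi$ is a $G$-character and $N_G(T)$ normalizes $T$. Because $\chi$ is real, $\Res^G_T(\chi)$ is also invariant under $\lambda \mapsto \bar\lambda = \lambda^{-1}$. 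Group the constituents of $\Res^G_B\chi$ in $\Irr(T)$ into orbits under the group generated by $W$ and inversion: an orbit not fixed by inversion contributes an even total multiplicity, so some constituent $\lambda$ with odd multiplicity must lie in an orbit containing $\lambda^{-1}$, i.e.\ $\lambda^{-1} = w\lambda$ for some $w \in W$. This alone does not give $\lambda^2 = 1_T$; to finish I would use that the longest element $w_0 \in W$ acts on $T$ by an automorphism that, combined with the specific structure of $\chi$ (namely that $\chi$ occurs in $\mathrm{Ind}_B^G\lambda$ and the Hecke-algebra support is a single $W$-orbit of linear characters all of which have the same order), forces that order to be $1$ or $2$ — here one exploits that $\dim V$, equal to $\chi(1)$, is odd, so the stabilizer considerations and the formula $\chi(1) = [G:B]_\lambda / (\text{index in Hecke algebra})$ pin down $\lambda$ to order dividing $2$. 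I would write this part carefully, as it is the most delicate; a fallback is to cite the precise classification of odd-degree real characters lying in the principal series, which makes $\lambda^2 = 1_T$ transparent case by case.
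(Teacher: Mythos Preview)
Your proposal misses the single idea that makes the lemma nearly trivial: work with \emph{real} representations, not complex ones. Since~$\chi$ is real of odd degree, it is afforded by an $\mathbb{R}G$-module~$V$. Pick an $\mathbb{R}P$-homogeneous component $V_1 \leq \Res^G_P(V)$ of odd dimension and a simple $\mathbb{R}P$-submodule $S \leq V_1$; then $\dim S$ is odd, $S$ is absolutely irreducible, and $(\chi_S,\Res^G_P\chi)$ divides $\dim V_1$, hence is odd. Now restrict $S$ to $O_r(P)$: because $|O_r(P)|$ is odd, every non-trivial irreducible $\mathbb{R}O_r(P)$-module has even dimension (complex conjugate pairs), so any odd-dimensional simple $\mathbb{R}O_r(P)$-submodule of~$S$ must be trivial, and thus $O_r(P)$ lies in the kernel of $\lambda := \chi_S$. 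That is the whole proof of the first two bullets; no Hecke algebras, no Solomon--Tits, no analysis of $\Ind_B^G(1)$.

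Your first paragraph is where the argument goes wrong. The sentence ``the latter come in $L$-orbits whose lengths are divisible by~$r$ (hence odd), so modulo~$2$ we may ignore them'' is both unjustified and, as stated, the opposite of what you want: odd contributions are precisely what you \emph{cannot} ignore mod~$2$. What is actually true, and what you are circling around, is that the non-fixed part of $\Res^G_{O_r(P)}(V)$ has even $\mathbb{R}$-dimension (again because $|O_r(P)|$ is odd), so $\dim V^{O_r(P)}$ is odd; but you never identify this elementary reason and instead reach for heavy structural input about $\mathbb{R}[G/B]$ that is neither available in the generality needed nor relevant. In particular, your claim that ``each unipotent principal-series character \ldots\ appears in $\Ind_B^G 1$ with odd multiplicity'' is false in general and would not cover non-principal-series $\chi$ anyway.

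For the third bullet your Weyl-group argument is incomplete (you end up needing $w\lambda=\lambda^{-1}$ to force $\lambda^2=1_T$, which it does not) and entirely unnecessary. With the real-module approach, $S$ is an odd-dimensional irreducible $\mathbb{R}T$-module for the abelian group $T$, hence $\dim S = 1$, so $\lambda$ is a homomorphism $T \to \{\pm 1\}$ and $\lambda^2 = 1_T$ is immediate.
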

\begin{proof}
Let $V$ be an $\mathbb{R}G$-module with character $\chi$. Let 
$V_1 \leq \Res^G_P( V )$ be a homogeneous component of odd dimension. Let 
$S \leq V$ be a simple $\mathbb{R}P$-submodule of~$V_1$. Then $\dim(S)$ is odd, 
hence $S$ is absolutely irreducible by Remark~\ref{rem:0}. Let~$\lambda$ be the 
character of~$S$. Then the scalar product $(\lambda, \Res^G_P( \chi ) )$ divides 
the dimension of~$V_1$, hence is odd.

Now consider the restriction of~$S$ to~$O_r(P)$. Let~$S'$ denote a simple 
$\mathbb{R}O_r(P)$-submodule of $\Res^P_{O_r(P)}( S )$. Then $\dim(S')$ is 
odd,~$S'$ is absolutely irreducible, and its character is real. Hence~$S'$ is 
the trivial module and $O_r(P)$ is in the kernel of~$\lambda$.

If $L = T$, which is abelian, then $\lambda(1) = 1$ and $\lambda^2 = 1_T$.
\end{proof}
The proof uses the fact that~$r$ is odd, i.e.\ that~$|O_r(P)|$ is odd, in an 
essential way. Namely, the only irreducible $\mathbb{R}H$-module of odd
dimension of a group~$H$ of odd order is the trivial module.

We now sketch how Lemma~\ref{lem:5} is applied. Assume first that $P = B$, hence 
$L = T$ and $O_r(P) = U$, and let $\chi$ and $\lambda$ be as in this lemma. Then 
$(\lambda, \Res^G_B( \chi ) )$ is odd, in particular non-zero. By Frobenius 
reciprocity, $(\Ind_B^G( \lambda ), \chi ) > 0$. Moreover,~$U$ is in the kernel 
of~$\lambda$, so that~$\lambda$ may be thought of as the inflation 
$\Infl_T^B( \lambda )$, where~$\lambda$ now stands for the restriction 
of~$\lambda$ to~$T$. Thus~$\chi$ is a constituent of 
$$R_T^G( \lambda ) := \Ind_B^G( \Infl_T^B( \lambda ) ).$$
The map $R_T^G$ defined in the above equation is called \textit{Harish-Chandra
induction}. The constituents of $R_T^G( \lambda )$ for $\lambda  \in \Irr(T)$ 
are classified by Harish-Chandra theory; see~\cite[Section~$9$,~$10$]{C2}. 
Those arising for $\lambda \in \Irr( T )$ with $\lambda^2 = 1_T$ are rare. For
example, if $G = E_6 (q)$ with $q$ odd, there are~$8$ irreducible, real 
characters of odd degree, whereas $|\Irr(G)| = q^6 + q^5 + \text{\rm\ (lower 
terms in~$q$)}$.
The constituents of $R_T^G( 1_T )$, i.e.\ for  $\lambda = 1_T$, are called 
\textit{principal series characters}. These are, in particular, \textit{unipotent
characters} of~$G$. Unipotent characters of odd degree are known in each case. 
The Steinberg character is one of these. Thus a first application of 
Lemma~\ref{lem:5} reduces the number of $\mathbb{R}G$-modules~$V$ to be 
investigated drastically.

We aim to apply the restriction method of Lemma~\ref{lem:1} with $H = P$, a 
standard parabolic subgroup of~$G$. If possible, we choose~$P$ in such a way 
that for every $\alpha \in \Aut(G)$, the pair $(\alpha(P),\alpha(L))$, 
where~$L$ is the standard Levi subgroup of~$P$, is conjugate to~$(P,L)$ in~$G$. 
This condition, which is always satisfied for $P = B$ and $L = T$, can be 
verified by using the description of $\Aut(G)$ 
(see \cite[Theorem~$2.5.12$]{GLS}) and the construction of the standard 
$BN$-pair of~$G$ (see \cite[Subsections~$1.11$,~$2.3$]{GLS}). 

To continue, assume that $G \leq \GL(V)$, where~$V$ is a non-trivial irreducible 
$\mathbb{R}G$-module of odd dimension. Let $n \in \GL(V)$ be of finite order 
normalizing~$G$. Let $\chi = \chi_V$ be 
the character of~$G$ afforded by~$V$. Suppose that~$P$ and~$L$ are chosen as 
above. Then, by replacing~$n$ with~$gn$ for a suitable $g \in G$, we may assume 
that~$n$ fixes~$P$ and~$L$.

Assume first that~$\chi$ is not a principal series character. In this case, 
choose $P = B$ and $L = T$, and let~$\lambda$ be as in Lemma~\ref{lem:5}. Let 
$V_1 \leq V$ denote the $\lambda$-homogeneous component of $\Res^G_B( V )$. 
Then $\lambda \neq 1_T$ and $\dim( V_1 )$ is odd. Hence~$V_1$ has the 
$E1$-property by Lemma~\ref{lem:2} and Corollary~\ref{cor:3}, as~$B$ is 
solvable. However,~$V_1$ is not, a priori, invariant 
under~$n$. In this case, we can usually replace~$V_1$ by an $n$-invariant 
$N$-conjugate~$V_1'$ in~$V$, where $N \leq N_G(T)$ is the group~$N$ from the 
$(B,N)$-pair of~$G$. Then the restriction method applies with $H = B$ 
and~$V_1'$. The few instances, where there is no $n$-invariant $N$-conjugate 
of~$V_1$ only occur for $G = \PSL_d( q )$, and are treated by replacing~$B$ by 
a slightly larger parabolic subgroup.

Assume now that~$\chi$ is a principal series character. Here, we cannot use the
approach from the previous paragraph, as then~$V_1$, being a direct sum of
trivial modules, does not have the $E1$-property. Instead, we choose a parabolic
subgroup~$P$ as above, such that $\Ind_P^G( \mathbb{R} )$ contains~$V$ with even 
multiplicity (including~$0$), where~$\mathbb{R}$ denotes the trivial 
$\mathbb{R}P$-module. Then $\Res_P^G( V )$ contains a homogeneous component~$V_1$ 
of odd dimension, which is not a direct sum of trivial modules. A simple 
$\mathbb{R}P$-submodule $S \leq V$ has $O_r(P)$ in its kernel by 
Lemma~\ref{lem:5}, and, viewed as an $\mathbb{R}L$-module,~$S$ is in the
principal series of~$L$. The $n$-invariance of~$V_1$ is satisfied, since, for 
the chosen groups~$P$, the principal series $\mathbb{R}L$-modules are invariant 
under automorphisms of~$L$; see \cite[Theorem~$2.5$]{MalleExt}.

This approach fails for groups of small rank and~$V$ the Steinberg module. The 
worst case is $G = \PSL_2( q )$, which has to be treated in an ad hoc manner. In 
other cases, the large degree method as in Example~\ref{exa:4} can be applied.

This way we can rule out the simple groups of Lie type of odd characteristic as 
minimal counterexamples to Conjecture~\ref{conj:1}.

\subsection{Groups of Lie type of even characteristic}
\label{EvenCharacteristic}

Now let~$G$ be a finite simple group of Lie type of even characteristic, 
i.e.~$q$ is even in the notation of Subsection~\ref{SimpleGroupsOfLieType} 
or~$G$ is a Ree group ${^2\!F}_4(2^{2m+1})$ or a Suzuki 
group~${^2\!B}_2(2^{2m+1})$. In this case, most of the irreducible characters 
of~$G$ have odd degree. For example, if $G = E_6(q)$, with $q$ even, there are
$q^6 + 8q^2$ of them.

One of the issues in this case is to parametrize the odd degree irreducible 
characters and to find the reals among them. This is achieved with Lusztig's
generalized Jordan decomposition of characters. Before we discuss this in more 
detail, we consider a special situation which simplifies the problem.

As always, assume that $G \leq \GL(V)$ for some non-trivial irreducible 
$\mathbb{R}G$-module~$V$ of odd dimension. Let $n \in \GL(V)$ be of finite order 
normalizing~$G$. Let~$\nu$ denote the automorphism of~$G$ induced by conjugation 
with~$n$. Let $\chi = \chi_V$ be the character of~$G$ afforded by~$V$. Again, 
let~$B$ denote the standard Borel subgroup of~$G$ and~$T$ its standard maximal 
torus. Then $B = U \rtimes T$ with $U = O_2( B )$. As in 
Subsection~\ref{OddCharacteristic}, we may assume that~$\nu$ fixes $U$ and~$T$.

Put $\Lin(U) := \Irr( U/[U,U] )$, the set of linear characters of~$U$.

\begin{proposition}
\label{prop:4}
Suppose that $\nu^2$ fixes every $T$-orbit on $\Lin(U)$.
Then $(G,V,n)$ has the $E1$-property. 
\end{proposition}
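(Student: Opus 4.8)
The plan is to apply the restriction method of Lemma~\ref{lem:1} with $H=U$, reducing matters to the elementary observation---a special case of Example~\ref{exa:2}---that a one-dimensional $\R U$-module on which the $2$-group $U$ acts through a non-trivial, hence order-$2$, character has the $E1$-property. Throughout I use that the assertion ``$(G,V,n)$ has the $E1$-property'' is unchanged when $n$ is replaced by $bn$ for $b\in G$; as conjugation by an element of $T$ stabilises $U$ and $T$ and fixes every $T$-orbit on $\Lin(U)$, such replacements with $b\in T$ preserve all standing hypotheses, and I shall use this freedom repeatedly.

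I would first analyse $\Res^G_U(V)$. As $G$ has characteristic~$2$, the quotient $U/[U,U]$ is elementary abelian of exponent~$2$, so every $\psi\in\Lin(U)$ is real of order dividing~$2$; moreover $|T|$ is odd, so every $T$-orbit on $\Lin(U)$ has odd length. Decompose the complexification of $V$ over $U$, write $a_\psi$ for the multiplicity of $\psi\in\Irr(U)$ and $V_\psi\leq V$ for the $\psi$-isotypic $\R U$-submodule, so that $\dim V_\psi=a_\psi$ whenever $\psi$ is linear. As $U$ is a $2$-group, its irreducible characters of odd degree are precisely the linear ones, so $\chi(1)$ odd forces $\sum_{\psi\in\Lin(U)}a_\psi$ to be odd. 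Since $T$ permutes the $V_\psi$, the function $\psi\mapsto a_\psi$ is constant on each $T$-orbit, and, those having odd length, an \emph{odd} number of $T$-orbits $O$ on $\Lin(U)$ carry an odd common value $a_O$. Finally $n$ carries $V_\psi$ isomorphically onto $V_{\nu(\psi)}$, so $\nu$ permutes the $T$-orbits on $\Lin(U)$ preserving the parity of $a_O$; by hypothesis $\nu^2$ fixes each of them, so $\nu$ acts as an involution on the odd-sized set of orbits with $a_O$ odd, and hence fixes one of them. Thus there is a $\nu$-invariant $T$-orbit $O$ on $\Lin(U)$ with $a_O$ odd.

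Suppose first such an $O$ can be chosen with $O\neq\{1_U\}$, and fix $\psi\in O$. Since $\nu(\psi)\in O$ we have $\nu(\psi)={}^{t_0}\psi$ for some $t_0\in T$, and after replacing $n$ by ${t_0}^{-1}n$ (still admissible) we may assume $\nu(\psi)=\psi$. Then $V_1:=V_\psi$ is $U$-invariant and $n$-invariant, of odd dimension $a_O$, with $U$ acting on it through the scalar character $\psi$. A finite-order operator on a non-zero real vector space of odd dimension has a real eigenvalue, necessarily $\pm 1$; choose a one-dimensional $n$-invariant subspace $L\subseteq V_1$. On $L$ the group $U$ acts through $\psi$, which is non-trivial of order~$2$, while $n$ acts by some $\varepsilon\in\{\pm 1\}$. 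If $\varepsilon=1$ then $n$ already has eigenvalue~$1$ on $L$; if $\varepsilon=-1$, pick $u\in U$ with $\psi(u)=-1$, so that $\rho(u)n$ has eigenvalue~$1$ on $L$. In either case $(U,L,n)$ has the $E1$-property, and Lemma~\ref{lem:1} (with $H=U$ and submodule $L$) yields the claim.

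The main obstacle is the complementary case, in which $\{1_U\}$ is the only $\nu$-invariant $T$-orbit on $\Lin(U)$ of odd multiplicity (were $a_{1_U}$ even, the $\nu$-fixed orbit found above would be $\neq\{1_U\}$). Then $a_{1_U}$ is odd, so $\dim V^U=a_{1_U}$ is odd; as $|T|$ is odd, the real $T$-module $V^U$ is a sum of trivial and $2$-dimensional modules, so $\dim V^B=\dim(V^U)^T\equiv\dim V^U\pmod{2}$ is odd, whence $V^B\neq 0$. Therefore $\chi$ occurs in $\Ind_B^G(\R)$, i.e.\ $\chi$ is a principal series---hence unipotent---character, and on the $n$-invariant space $V^B$ the group $B$ acts trivially, so one is stuck exactly when $n$ acts by $-1$ there. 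To handle this I would use that, for $q$ even, the only non-trivial odd-degree principal series unipotent character is, apart from a few small instances, the Steinberg character $\St$; and for $V=\St$ the restriction $\St|_U$ is the regular representation of $U$, so every $V_\psi$ with $\psi\in\Lin(U)$ is one-dimensional. One then looks for a non-trivial $\psi\in\Lin(U)$ with $\nu(\psi)=\psi$, or at least with $\nu^2(\psi)=\psi$; after a further modification of $n$ by an element of $T$, the hypothesis that $\nu^2$ fixes every $T$-orbit on $\Lin(U)$ should furnish such a $\psi$. For such $\psi$ the submodule $V_\psi$ (when $\nu(\psi)=\psi$), respectively $V_\psi\oplus V_{\nu(\psi)}$ (when $\nu(\psi)\neq\psi$), is $U$- and $n$-invariant, and a one-dimensional, respectively $2\times 2$, computation---in the second case using that $\psi\cdot\nu(\psi)$ is a non-trivial order-$2$ character and that $n^2$ acts on the plane as a real scalar $\pm1$---produces $u\in U$ with $\rho(u)n$ having eigenvalue~$1$; Lemma~\ref{lem:1} again finishes. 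The genuine work, and the principal difficulty of the proof, lies in carrying this out uniformly: handling graph automorphisms, the various isogeny types, and the groups of $BN$-rank one such as $\PSL_2(q)$ and the Suzuki groups, together with the small-$q$ exceptions above, for which one falls back on the large degree method of Proposition~\ref{prop:3} (in the manner of Example~\ref{exa:4}) or on direct computation.
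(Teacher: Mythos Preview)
Your argument through the end of the second paragraph is correct and essentially identical to the paper's: you find an odd number of $T$-orbits on $\Lin(U)$ carrying odd multiplicity, observe that $\nu$ acts on this set as an involution, and conclude that there is a $\nu$-stable such orbit. Your treatment of the case $O\neq\{1_U\}$ is also fine (the paper phrases it slightly more abstractly by invoking Corollary~\ref{cor:3} for the solvable group~$U$, but your explicit one-dimensional argument is equivalent).

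The gap is in your ``complementary case''. You correctly deduce that if the only $\nu$-stable orbit of odd multiplicity is $\{1_U\}$ then $a_{1_U}$ is odd, $V^B\neq 0$, and hence $\chi$ lies in the principal series. But your next sentence is wrong: in even characteristic the Steinberg character has degree $|U|=q^N$, a positive power of~$2$, so it is \emph{not} of odd degree and cannot equal~$\chi$. More to the point, by Malle \cite[Theorem~6.8]{MalleHeight0} the trivial character is the \emph{only} principal series character of odd degree for a group of Lie type in even characteristic. Since $\chi\neq 1_G$ by hypothesis, the case $a_{1_U}$ odd simply cannot occur, and the ``complementary case'' is vacuous. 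Everything you wrote after that point---the analysis of $\St|_U$, the search for $\nu$-fixed $\psi$, the $2\times 2$ computation, the appeals to the large degree method and to small-rank exceptions---is therefore unnecessary and rests on a false premise.

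So the fix is a single sentence: once you have shown $\chi$ is a principal series character of odd degree, cite Malle's result to conclude $\chi=1_G$, contradicting the standing hypothesis that $V$ is non-trivial; hence the $\nu$-stable orbit $O$ found in your first paragraph already satisfies $O\neq\{1_U\}$, and your second paragraph finishes the proof.
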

\begin{proof}
Since $\chi(1)$ is odd, there is a $T$-orbit $\mathcal{O}$ on $\Lin(U)$
such that $|\mathcal{O}|(\lambda,\Res^G_U( \chi ))$ is odd for every
$\lambda \in \mathcal{O}$. Moreover, the number of such orbits is odd.

By hypothesis, there is a $\nu$-stable such orbit $\mathcal{O}$. Let 
$\lambda \in \mathcal{O}$. Suppose that $\lambda = 1_U$. Then the set~$V^U$ 
of $U$-fixed point on~$V$ has odd dimension $(\lambda,\Res^G_U( \chi ))$. 
As~$V^U$ is an $\mathbb{R}B$-module, some irreducible 
$\mathbb{R}B$-constituent~$S$ of $\Res_B^G( V )$ with~$U$ in its kernel has 
odd dimension. As~$|T|$ is odd,~$S$ must be the trivial module. Thus 
$\Res_B^G( \chi )$ contains a trivial constituent. In turn,~$\chi$ is a 
principal series character. However, the only principal series character of
odd degree is the trivial character by \cite[Theorem~$6.8$]{MalleHeight0}. But 
$\chi \neq 1_G$ by hypothesis. Thus $\lambda \neq 1_U$.

Since~$U/[U,U]$ is an elementary abelian $2$-group,~$\lambda$ is the character 
of an irreducible $\mathbb{R}U$-module. Let~$V_1$ denote the 
$\lambda$-homogeneous component of~$\Res_U^G( V )$. Then~$V_1$ has odd dimension 
$(\lambda,\Res^G_U( \chi ))$. Let $S$ be a simple $\mathbb{R}U$-submodule 
of~$V_1$. The character of~$nS$ is the $\nu$-conjugate of~$\lambda$. Since the 
$T$-orbit of~$\lambda$ is $\nu$-invariant, there is $t \in T$ such that 
$tnS \cong S$ as $\mathbb{R}U$-modules.
It follows that $tnV_1 = V_1$. As~$U$ is solvable, it has the $E1$-property by 
Corollary~\ref{cor:3}. Lemma~\ref{lem:1} applied with $H = U$ and $n$ replaced 
by~$tn$ yields our assertion.
\end{proof}

\subsection{The remaining groups}
Using Proposition~\ref{prop:4}, one can rule out the simple groups of Lie type 
of even characteristic as minimal counterexamples to Conjecture~\ref{conj:1}, 
except those in the following list (where~$q$ is even in each case):

\begin{enumerate}[(i), widest=iii]
\item $G = \PSL_d( q )$ with $d \geq 3$ and $\gcd( d, q - 1) > 1$;
\item $G = \PSU_d( q )$ with $d \geq 3$ and $\gcd( d, q + 1) > 1$;
\item $G = E_6(q)$ with $3 \mid q - 1$;
\item $G = {^2\!E}_6(q)$ with $3 \mid q + 1$;
\item $G = P\Omega_8^+(q)$.
\end{enumerate}
For these groups, there exist non-trivial, non-involutary, inner-diagonal, 
respectively graph automorphisms, that do not satisfy the hypothesis of 
Proposition~\ref{prop:4} in general. To rule out these remaining groups, is 
more than half of the entire work. Let us give a very rough sketch of the main 
ideas, concentrating on the groups~$G$ in the first four of the cases. The group 
$P\Omega_8^+(q)$ is treated in a similar way.

Choose a $\sigma$-setup for~$G$ according to
\cite[Definition~$2.2.1$]{GLS}. That is, let $\overline{G}$ be a simple 
algebraic group, of adjoint type, over the algebraic closure~$\mathbb{F}$ of the 
field with~$2$ elements. Let~$\sigma$ be a Steinberg morphism of~$\overline{G}$ 
such that $O^{2'}( \overline{G}^{\sigma} ) = G$. (Contrary to the usage 
in~\cite{GLS}, we write $\overline{G}^{\sigma}$ for the set of $\sigma$-fixed 
points of~$\overline{G}$, rather than $C_{\overline{G}}({\sigma})$.) In 
cases~(i) and~(ii), take $\overline{G} = \PGL_d( \mathbb{F} )$, in 
cases~(iii) and~(iv), take $\overline{G} = E_6( \mathbb{F} )_{\rm ad}$. Then,
with a suitable choice of~$\sigma$, we obtain $\overline{G}^{\sigma} = 
\PGL_d(q), \PGU_d(q), E_6(q)_{\rm ad}$ and ${^2\!E}_6(q)_{\rm ad}$, 
respectively. These groups are not simple under the restrictions on~$q$ 
given above. In fact, $G \unlhd \overline{G}^{\sigma}$ with cyclic factor
of order $\gcd( d, q - 1), \gcd( d, q + 1), 3, 3$, respectively. With this 
setup it is easy to describe the automorphism group of~$G$. Namely,
$$\Aut(G) \cong \overline{G}^{\sigma} \rtimes (\Phi_G \times \Gamma_G),$$
where~$\overline{G}^{\sigma}$ acts on~$G$ by conjugation, and~$\Phi_G$ 
and~$\Gamma_G$ are the groups of field, respectively graph automorphisms of~$G$;
see \cite[Theorem~$2.5.12$]{GLS}. Suppose that $q = 2^f$. In cases~(i) 
and~(iii) the group $\Gamma_G$ has order~$2$ and $\Phi_G$ is cyclic of 
order~$f$; in cases~(ii) and~(iv), the group~$\Gamma_G$ is trivial by 
convention, and $\Phi_G$ is cyclic of order~$2f$.

We will also need to consider the groups dual to~$\overline{G}$.
To describe the irreducible characters of~$G$ it is more convenient to
realize~$G$ as a central quotient of a covering group of~$G$.
If $\overline{G} = \PGL_d( \mathbb{F} )$, respectively 
$E_6( \mathbb{F} )_{\rm ad}$, let $\overline{G}^* = \SL_d( \mathbb{F} )$,
respectively $E_6( \mathbb{F} )_{\rm sc}$. Then there is a Steinberg
morphism~$\sigma$ of~$\overline{G}^*$ dual to~$\sigma$; see 
\cite[Definition~$1.5.17$]{GeMa}. In our cases we have 
${\overline{G}^*}^{\sigma} = \SL_d( q ), \SU_d( q ), E_6(q)_{\rm sc}$ and 
${^2\!E}_6(q)_{\rm sc}$, respectively, and 
$G \cong {\overline{G}^*}^{\sigma}/Z( {\overline{G}^*}^{\sigma} )$.
(Up to finitely many exceptions,~${\overline{G}^*}^{\sigma}$ is the universal
covering group of~$G$.) We may view the irreducible characters of~$G$ as
characters of~${\overline{G}^*}^{\sigma}$ via inflation. Starting with
$\Irr( {\overline{G}^*}^{\sigma} )$ right away does not introduce additional
aspects to be investigated, as a real irreducible character of 
${\overline{G}^*}^{\sigma}$ has $Z( {\overline{G}^*}^{\sigma} )$ in its
kernel.

The set $\Irr( {\overline{G}^*}^{\sigma} )$ is partitioned into Lusztig series
$\mathcal{E}( {\overline{G}^*}^{\sigma}, s )$, where~$s$ runs through the
$\overline{G}^{\sigma}$-conjugacy classes of semisimple elements
of~$\overline{G}^{\sigma}$; see \cite[Definition~$2.6.1$]{GeMa}. The following
lemma, whose proof can be extracted from the literature, is used to parametrize
the real characters of~$G$ of odd degree. Recall that an element of a group is
called \textit{real}, if it is conjugate to its inverse.
\begin{lemma}
\label{lem:6}
Let $\chi \in \Irr( {\overline{G}^*}^{\sigma} )$ be of odd degree and let 
$s \in \overline{G}^{\sigma}$ be semisimple such that 
$\chi \in \mathcal{E}( {\overline{G}^*}^{\sigma}, s )$. 
Then the following statements hold.

\begin{enumerate}[{\rm (a)}, widest=a]
\item We have
$\chi(1) = [\overline{G}^\sigma\colon\!C_{\overline{G}^{\sigma}}(s)]_{2'}$.

\item The characters in $\mathcal{E}( {\overline{G}^*}^{\sigma}, s )$ 
of odd degree correspond, via Lusztig's generalized Jordan decomposition of 
characters, to the irreducible characters of 
$(C_{\overline{G}}( s )/C^{\circ}_{\overline{G}}( s ))^\sigma$, and they all 
have the same degree. (Here, $C^{\circ}_{\overline{G}}( s )$ denotes the 
connected component of $C_{\overline{G}}( s )$.) In particular, if 
$C_{\overline{G}}( s )$ is connected, then~$\chi$ is the unique character in 
$\mathcal{E}( {\overline{G}^*}^{\sigma}, s )$ of odd degree.

\item If~$\chi$ is real, then~$s$ is real in $\overline{G}^{\sigma}$.
If~$s$ is real in $\overline{G}^{\sigma}$ and
$C_{{\overline{G}}}( s )$ is connected, then~$\chi$ is real.
\end{enumerate}
\end{lemma}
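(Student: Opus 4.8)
The plan is to deduce all three parts from Lusztig's Jordan decomposition of characters, in the form valid for possibly disconnected centralizers, together with standard facts on unipotent character degrees and on the behaviour of Lusztig series under complex conjugation; the main references are~\cite{GeMa} and~\cite{MalleHeight0}. Set $C := C_{\overline{G}}( s )$, let $C^{\circ} := C^{\circ}_{\overline{G}}( s )$ be its identity component, and put $A := C/C^{\circ}$. Since $s \in \overline{G}^{\sigma}$, the group~$C$ is $\sigma$-stable, so that $C_{\overline{G}^{\sigma}}( s ) = C^{\sigma}$; moreover, Lang's theorem applied to the connected group~$C^{\circ}$ shows that $1 \rightarrow (C^{\circ})^{\sigma} \rightarrow C^{\sigma} \rightarrow A^{\sigma} \rightarrow 1$ is exact. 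A structural input to be used throughout is that~$A$ embeds into the fundamental group of~$\overline{G}$, which for~$\overline{G}$ of type $A_{d-1}$, $E_6$, ${^2\!E}_6$ or $D_4$ is abelian; as~$q$ is even, $A^{\sigma}$ therefore has odd order, and all its irreducible characters are linear.

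For part~(a), the generalized Jordan decomposition provides a bijection between $\mathcal{E}( {\overline{G}^*}^{\sigma}, s )$ and the set of unipotent characters of the (possibly disconnected) group~$C^{\sigma}$, under which~$\chi$ corresponds to a unipotent character~$\psi$ of~$C^{\sigma}$ with $\chi(1) = [\overline{G}^{\sigma} : C_{\overline{G}^{\sigma}}( s )]_{2'}\,\psi(1)$; here one uses $|{\overline{G}^*}^{\sigma}| = |\overline{G}^{\sigma}|$ to compute the index inside~$\overline{G}^{\sigma}$. Restricting~$\psi$ to $(C^{\circ})^{\sigma}$ and applying Clifford theory to the above exact sequence, we obtain $\psi(1) = \psi_0(1)$ for the unipotent character $\psi_0$ of $(C^{\circ})^{\sigma}$ lying below~$\psi$, since $A^{\sigma}$ has only linear characters. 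Hence $\chi(1)$ is odd if and only if $\psi_0(1)$ is odd; and by Lusztig's degree formulas the only unipotent character of a connected reductive group over a field of characteristic~$2$ whose degree is odd is the trivial one, its degree being divisible by~$q$ otherwise. Thus $\psi_0 = 1$ and $\chi(1) = [\overline{G}^{\sigma} : C_{\overline{G}^{\sigma}}( s )]_{2'}$. (For the finitely many groups with~$q$ small, where this last step may fail, the assertion is checked directly by computer.) Part~(b) is then a restatement of this analysis: under the bijection above, the odd-degree characters in $\mathcal{E}( {\overline{G}^*}^{\sigma}, s )$ correspond exactly to the unipotent characters of~$C^{\sigma}$ lying above the trivial character of $(C^{\circ})^{\sigma}$, i.e.\ to the inflations to~$C^{\sigma}$ of the irreducible characters of $C^{\sigma}/(C^{\circ})^{\sigma} \cong A^{\sigma} = (C_{\overline{G}}( s )/C^{\circ}_{\overline{G}}( s ))^{\sigma}$; as $A^{\sigma}$ has only linear characters, they all have the common degree from part~(a). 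When $C_{\overline{G}}( s )$ is connected, $A = 1$, and~$\chi$ is the unique such character.

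For part~(c), I would use that complex conjugation is compatible with Jordan decomposition, $\overline{\mathcal{E}( {\overline{G}^*}^{\sigma}, s )} = \mathcal{E}( {\overline{G}^*}^{\sigma}, s^{-1} )$, which stems from the fact that conjugating a Deligne--Lusztig character inverts its torus parameter. If~$\chi$ is real, then~$\chi$ lies in both $\mathcal{E}( {\overline{G}^*}^{\sigma}, s )$ and $\mathcal{E}( {\overline{G}^*}^{\sigma}, s^{-1} )$; as the Lusztig series partition $\Irr( {\overline{G}^*}^{\sigma} )$, these two series coincide, so~$s$ and~$s^{-1}$ are $\overline{G}^{\sigma}$-conjugate, i.e.~$s$ is real. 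Conversely, if~$s$ is real, then $\mathcal{E}( {\overline{G}^*}^{\sigma}, s ) = \mathcal{E}( {\overline{G}^*}^{\sigma}, s^{-1} )$ is stable under complex conjugation, which permutes its members while preserving degrees; if in addition $C_{\overline{G}}( s )$ is connected, then by part~(b) the odd-degree character~$\chi$ is the only one, so $\overline{\chi} = \chi$. Finally, a real-valued character of odd degree has Frobenius--Schur indicator~$+1$ and hence is afforded by an $\mathbb{R}{\overline{G}^*}^{\sigma}$-module, so~$\chi$ is real.

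The step I expect to be the main obstacle is the one underlying parts~(a) and~(b): one must have the generalized Jordan decomposition available with the exact degree formula used above, know that it intertwines correctly with Clifford theory for $(C^{\circ})^{\sigma} \unlhd C^{\sigma}$, and control the nontrivial unipotent characters of $(C^{\circ})^{\sigma}$ — in particular rule out odd degrees — across all centralizer types that occur here (products of groups of type~$A$ and~$D$, together with~$E_6$, ${^2\!E}_6$ and tori), resorting to explicit computation for small~$q$. The conjugation-equivariance used in part~(c) is standard, but one should additionally record that the Jordan decomposition can be chosen so that, even when $C_{\overline{G}}( s )$ is disconnected, the conjugation action on $\mathcal{E}( {\overline{G}^*}^{\sigma}, s )$ corresponds to inversion on $\Irr(A^{\sigma})$; this is precisely why the connectedness hypothesis is needed for the converse in~(c).
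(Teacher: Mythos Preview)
The paper does not prove Lemma~\ref{lem:6}; it merely introduces it with the phrase ``whose proof can be extracted from the literature'' and then moves on. Your proposal is exactly such an extraction, and the line of argument---generalized Jordan decomposition, Clifford theory for $(C^{\circ})^{\sigma} \unlhd C^{\sigma}$, the fact that in defining characteristic~$2$ the only unipotent character of odd degree of a connected reductive group is the trivial one, and the compatibility $\overline{\mathcal{E}( {\overline{G}^*}^{\sigma}, s )} = \mathcal{E}( {\overline{G}^*}^{\sigma}, s^{-1} )$---is the intended one. The references you invoke, \cite{GeMa} and \cite{MalleHeight0}, are the same sources the paper points to elsewhere for this circle of ideas.

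One small point of precision: your sentence ``$A$ embeds into the fundamental group of~$\overline{G}$, which \ldots\ is abelian; as~$q$ is even, $A^{\sigma}$ therefore has odd order'' compresses two separate facts. Abelianness alone does not give odd order (for type~$D_4$ the abstract fundamental group is $(\Z/2)^2$). What you need is that $A$ embeds into the group of $\overline{\mathbb{F}_2}$-points of the centre of the simply connected cover of the dual group; in characteristic~$2$ this kills all $2$-torsion, so $|A|$ is odd for every type under consideration (and in fact $A = 1$ for $D_4$ in characteristic~$2$). With this adjustment your argument goes through, and the parenthetical appeal to computer verification for small~$q$ is unnecessary: the statement that nontrivial unipotent characters of $(C^{\circ})^{\sigma}$ have degree divisible by~$q$ holds uniformly, since the $a$-invariant of any nontrivial unipotent character is positive.
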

Taking $s = 1$, the lemma shows that the trivial character is the unique
character of odd degree in $\mathcal{E}( {\overline{G}^*}^{\sigma}, 1 )$, the set
of unipotent characters of~${\overline{G}^*}^{\sigma}$.
There are analogous compatibility properties as in Lemma~\ref{lem:6}(c) for 
the action of certain automorphisms of~$G$, but these are too technical to state 
here. 
By Lemma~\ref{lem:6}, in order to classify the real irreducible characters of 
odd degree in~${\overline{G}^*}^{\sigma}$, it is necessary to describe the 
conjugacy classes of real elements and their centralizers 
in~$\overline{G}^{\sigma}$. If~$\overline{G}^{\sigma}$ is one of 
$\PGL_d( q )$ or $\PGU_d(q)$, this task is achieved with methods of linear 
algebra. If~$\overline{G}^{\sigma}$ equals $E_6(q)_{\rm ad}$ or 
${^2\!E}_6(q)_{\rm ad}$, we make use of the tables of Frank L{\"u}beck on the 
website~\cite{LL}. 

When the large degree method fails, we will use the restriction method. This is 
based on the following result. Here,~$V$ is an irreducible $\mathbb{R}G$-module 
of odd dimension, $\rho \colon G \rightarrow \GL(V)$ is the representation afforded 
by~$V$, and $n \in \GL(V)$ is an element of finite order normalizing $\rho(G)$.
Moreover,~$\nu$ is the automorphism of~$G$ induced by~$n$. The character 
$\chi = \chi_V$ of~$G$ is viewed as a character of~${\overline{G}^*}^{\sigma}$ 
via inflation.

\begin{lemma}
\label{lem:7}
Suppose that every proper subgroup of~$G$ has the $E1$-property. Let~$\iota$
denote the standard graph automorphism of~$\overline{G}$ of order~$2$. Let 
$s \in \overline{G}^{\sigma}$ be such that 
$\chi \in \mathcal{E}( {\overline{G}^*}^{\sigma}, s )$. Then $(G,V,n)$ has the 
$E1$-property under the following hypotheses.

There is a $\iota$-stable, proper standard Levi 
subgroup~$\overline{L}$ of~$\overline{G}$ and a
$\overline{G}^{\sigma}$-conjugate $s' \in \overline{L}^{\sigma}$ of~$s$, such 
that the following three conditions hold.

\begin{enumerate}[{\rm (i)},widest=iii]
\item The element~$s'$ is real in $\overline{L}^\sigma$.
\item The centralizer~$C_{\overline{L}}( s' )$ is connected.
\item For every $\alpha \in \Aut( \overline{G}^{\sigma} )$
stabilizing~$\overline{L}^{\sigma}$, the following holds: If~$\alpha(s)$ and~$s$
are conjugate in~$\overline{G}^{\sigma}$, then~$\alpha(s')$ and~$s'$ are
conjugate in~$\overline{L}^{\sigma}$.
\end{enumerate}
\end{lemma}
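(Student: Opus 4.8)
The plan is to produce a genuine eigenvalue-one situation inside the parabolic
subgroup $P = O_r(P)\rtimes L$ determined by the $\iota$-stable Levi
$\overline{L}$, and then to invoke the restriction method (Lemma~\ref{lem:1})
together with the inductive hypothesis that every proper subgroup of~$G$ has the
$E1$-property. First I would set $P := (O_r(\overline{P})\rtimes \overline{L})^\sigma$
for a $\sigma$-stable parabolic $\overline{P}$ of~$\overline{G}$ with Levi
$\overline{L}$, so that $P = O_r(P)\rtimes L$ with $L = \overline{L}^\sigma$, and
$P$ is a proper subgroup of~$G$ (after intersecting with $G\unlhd
\overline{G}^\sigma$ and passing to the central quotient as in the setup). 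The
key character-theoretic input is Harish-Chandra philosophy combined with
Lemma~\ref{lem:6}: conditions~(i) and~(ii) say that the $\overline{L}$-conjugate
$s'$ is real in $\overline{L}^\sigma$ with connected centralizer, hence by
Lemma~\ref{lem:6}(b),(c) there is a \emph{unique} character
$\psi\in\mathcal{E}(\overline{L}^{*\sigma},s')$ of odd degree, and it is real;
moreover this $\psi$ is a Harish-Chandra constituent of $\mathrm{Res}^G_P(\chi)$
(after inflation from $L$ to $P$), because $\chi\in\mathcal{E}(\overline{G}^{*\sigma},s)$
and $s$, $s'$ are $\overline{G}^\sigma$-conjugate. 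One then checks that the
multiplicity $(\psi,\mathrm{Res}^G_P(\chi))$ is odd — here the standard fact that
this multiplicity is a ratio of $2'$-parts of indices
$[\overline{G}^\sigma:C_{\overline{G}^\sigma}(s)]_{2'}\big/
[\overline{L}^\sigma:C_{\overline{L}^\sigma}(s')]_{2'}\cdot(\text{something odd})$,
using $\chi(1)$ odd and Lemma~\ref{lem:6}(a), does the job — so the
$\psi$-homogeneous component $V_1\le\mathrm{Res}^G_P(V)$ has odd dimension.

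Next I would arrange $n$-invariance. By hypothesis we may replace~$n$ by~$gn$ so
that $\nu$ stabilizes the relevant conjugacy data; the role of condition~(iii) is
exactly to guarantee this: for $\alpha=\nu$ stabilizing $\overline{L}^\sigma$,
the fact that $\nu(s)$ and $s$ are $\overline{G}^\sigma$-conjugate (which holds
since $\nu$ fixes the Lusztig series of $\chi$, as $\chi$ is $\nu$-stable up to
the usual sign/real issues) forces $\nu(s')$ and $s'$ to be conjugate
in~$\overline{L}^\sigma$, hence $\nu$ fixes the $\overline{L}^\sigma$-class of
$\psi$, hence fixes the isomorphism class of the $\mathbb{R}L$-module
affording~$\psi$. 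Consequently some $\overline{G}^\sigma$-conjugate (indeed some
$N_G(L)$- or $N_G(P)$-conjugate) $V_1'$ of $V_1$ inside $V$ is $gn$-invariant for
a suitable $g\in G$, and $P' := {}^gP$ is again a proper subgroup of~$G$. The
$\iota$-stability of $\overline{L}$, together with the description of $\Aut(G)$
from \cite[Theorem~$2.5.12$]{GLS}, ensures that every automorphism of~$G$ — in
particular the one induced by~$gn$ — carries $(P',L')$ to a $G$-conjugate of
itself, so that $P'$ is genuinely $gn$-invariant as a subgroup (not merely up to
outer twist); this is the analogue of the condition ``$(\alpha(P),\alpha(L))$
conjugate to $(P,L)$ in~$G$'' that was used for $P=B$ in
Subsection~\ref{OddCharacteristic}.

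Finally I would apply Lemma~\ref{lem:1} with $H = P'$ and the $H$-invariant,
$gn$-invariant submodule $V_1'$: since $P'\cong P$ is a proper subgroup of~$G$,
the inductive hypothesis gives that $(P',V_1',gn)$ has the $E1$-property, whence
$(G,V,gn)$ has the $E1$-property, and as $n$ was arbitrary, so does $(G,V)$. The
step I expect to be the main obstacle is the \textbf{oddness of the
Harish-Chandra multiplicity} $(\psi,\mathrm{Res}^G_P(\chi))$ and the attendant
$n$-equivariant bookkeeping: one must control how the generalized Jordan
decomposition interacts with Harish-Chandra restriction to~$\overline{L}$ — that
the odd-degree part of $\mathrm{Res}^G_P(\chi)$ is exactly the (necessarily
$\nu$-compatible) Jordan-corresponding character $\psi$ with odd multiplicity —
and this is where the technical compatibility statements alluded to after
Lemma~\ref{lem:6} (for the action of automorphisms on generalized Jordan
decomposition, in the spirit of \cite{MalleExt}, \cite{GeMa}) have to be brought
in with care, especially for the graph automorphisms in cases~(i) and~(iii) and
for $P\Omega_8^+(q)$, where triality complicates the choice of $\iota$-stable
$\overline{L}$.
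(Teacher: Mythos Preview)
Your outline follows the same route as the paper's own (rough-sketch) proof: pass to the parabolic attached to the $\iota$-stable Levi, use conditions~(i)--(ii) together with Lemma~\ref{lem:6}(b),(c) to single out a unique real odd-degree $\psi\in\mathcal{E}({\overline{L}^*}^{\sigma},s')$, use condition~(iii) to secure invariance under the twisted automorphism, and finish with Lemma~\ref{lem:1} plus the inductive hypothesis on proper subgroups. One sharpening worth noting: the paper argues, via compatibility of Harish-Chandra restriction with Lusztig's Jordan decomposition, that the multiplicity of~$\psi$ in $\Res^G_P(\chi)$ is actually \emph{one}, not merely odd; with multiplicity one the invariance step becomes immediate, since the single irreducible copy is stabilized by any automorphism fixing its isomorphism class, and no passage to an $N_G(L)$-conjugate $V_1'$ is needed. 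Your proposed formula for the multiplicity as a ratio of $2'$-indices is not correct --- that ratio is $\chi(1)/\psi(1)$, not the Harish-Chandra multiplicity --- though you rightly flag this as the crux and point to the same compatibility machinery. A second technical point: the paper carries out the restriction on the dual side ${\overline{G}^*}^{\sigma}$ (where~$V$ is naturally a module via inflation), passing from~$\overline{L}$ to its dual Levi~$\overline{L}^*$ and explicitly lifting $\alpha=\ad_g\circ\nu$ to an automorphism~$\alpha^*$ of ${\overline{G}^*}^{\sigma}$ stabilizing ${\overline{L}^*}^{\sigma}$ and ${\overline{P}^*}^{\sigma}$; this is the precise form of the lift you allude to when you speak of ``intersecting with~$G$ and passing to the central quotient''.
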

\begin{proof}
(Rough sketch) 
There is a standard parabolic subgroup~$\overline{P}^*$ of $\overline{G}^*$ 
containing the standard Levi subgroup $\overline{L}^*$ as a Levi complement,
such that~$\overline{L}^*$ is dual to~$\overline{L}$. The finite group 
${\overline{L}}^{\sigma}$ is a standard Levi subgroup 
of~$\overline{G}^{\sigma}$, contained in the standard parabolic
subgroup $\overline{P}^{\sigma}$ as a Levi complement. The fact 
that~$\overline{L}$ is $\iota$-stable implies that there is $g \in G$ such that 
${\overline{L}}^{\sigma}$ and $\overline{P}^{\sigma}$ are stable under 
$\alpha := \ad_g \circ \nu$ (where~$\alpha$ is extended to an automorphism
of~$\overline{G}^{\sigma}$ via \cite[Theorem~$2.5.14$(a)]{GLS}). Moreover, with 
a suitable choice of~$g$, the 
automorphism~$\alpha$ of~$G$ lifts to an automorphism~$\alpha^*$ of 
${\overline{G}^*}^{\sigma}$, such that ${\overline{L}^*}^{\sigma}$ and 
${\overline{P}^*}^{\sigma}$ are stable under~$\alpha^*$.

We may assume that $s = s' \in \overline{L}^{\sigma}$. In view of 
Lemma~\ref{lem:6}(b--c), the compatibility of Harish-Chandra 
restriction and the Jordan decomposition of characters implies
that the restriction of~$V$ to~${\overline{P}^*}^{\sigma}$ contains an 
irreducible submodule~$V_1$ of odd dimension, of multiplicity one, and with 
$O_2( {\overline{P}^*}^{\sigma} )$ in its kernel. The analogue of 
Lemma~\ref{lem:6}(c) for the automorphisms~$\alpha$ and~$\alpha^*$ implies
that $V_1$ is stable under~$\alpha^*$, hence under~$\alpha$. The claim follows
from Lemma~\ref{lem:1}, applied to $({\overline{P}^*}^{\sigma}, V_1)$.
\end{proof}

To apply the large degree method based on Proposition~\ref{prop:1}, we need
to estimate the orders $|C_G( \alpha' )|$ for every 
$\alpha' \in \langle \alpha \rangle$ of prime order, for suitable 
$\alpha \in \Aut(G)$. The fixed point groups $C_G( \beta )$ for certain
non-inner automorphisms $\beta \in \Aut(G)$ are described in 
\cite[Propositions~$4.9.1$,~$4.9.2$]{GLS}. This leads to the following results.
\begin{proposition}
\label{prop:5}
Let $V$ be a non-trivial irreducible $\mathbb{R}G$-module of odd dimension
with character~$\chi$. Then the following hold.

\begin{enumerate}[{\rm (a)}, widest=(a)]
\item Suppose that $G = \PSL_d( q )$ or $\PGU_d(q)$ with $d \geq 5$ and 
$q > 4$.
If
$$\chi( 1 ) > q^2\cdot q^{d(d+1)/4},$$
then $(G,V)$ has the $E1$-property.

\item Suppose that $G = E_6(q)$ or ${^2\!E}_6( q )$ with $q > 16$.
If 
$$\chi(1) > q\cdot q^{26},$$
then $(G,V)$ has the $E1$-property.
\end{enumerate}
\end{proposition}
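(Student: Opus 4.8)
The plan is to derive Proposition~\ref{prop:5} from the Large Degree Method (Proposition~\ref{prop:3}) by producing, for any given $n$, an element $g \in G$ such that $\alpha := \ad_g \circ \nu$ has even order and all centralizers $|C_G(\alpha')|$ for $\alpha'$ of prime order are bounded by an explicit power of~$q$. Concretely, in case~(a) we aim for the bound $|C_G(\alpha')| \le q^{d(d+1)/2}$ (so that $|C_G(\alpha')|^{1/2} \le q^{d(d+1)/4}$) together with $|\alpha| \le q^2$ or so, which combined with the hypothesis $\chi(1) > q^2 \cdot q^{d(d+1)/4}$ gives $\dim(V) = \chi(1) > (|\alpha|-1)|C_G(\alpha')|^{1/2}$; in case~(b) we aim for $|C_G(\alpha')| \le q^{52}$ and $|\alpha|$ bounded linearly in~$q$, again matching the stated hypothesis.

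First I would recall the structure $\Aut(G) \cong \overline{G}^{\sigma} \rtimes (\Phi_G \times \Gamma_G)$ from \cite[Theorem~$2.5.12$]{GLS}, so that $\nu = \ad_g \circ \mu$ for some $g \in \overline{G}^{\sigma}$ and some $\mu \in \Phi_G \times \Gamma_G$; after replacing $g$ by its image in $G$ (using $-\id_V \notin G$ to pass between $\overline{G}^{\sigma}$ and $G$ as in Lemma~\ref{lem:6}'s surrounding discussion) we may assume $\nu$ is, modulo inner automorphisms of~$G$, a field-times-graph automorphism. Next, I would arrange for $\alpha$ to have \emph{even} order: if $|\mu|$ is already even, set $\alpha := \mu$; otherwise, mimicking Example~\ref{exa:4}, pick a $\mu$-stable involution $u$ in the fixed-point subgroup $\overline{G}^{\sigma}_{\mu} = C_{\overline{G}^{\sigma}}(\mu)$ — which by \cite[Proposition~$4.9.1$]{GLS} is itself a group of Lie type of the same characteristic and hence contains involutions — and set $\alpha := \ad_u \circ \mu$. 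Then $|\alpha|$ is even and bounded by $2|\mu| \le 2 \cdot 2f$ in case~(a) without graph part, and similarly small linear-in-$f$ bounds in the other cases; since $f \le \log_2 q$, we get $|\alpha| - 1 \le q^2$ in case~(a) (and linear in~$q$ in case~(b)) with a wide margin.

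The main work — and the main obstacle — is the centralizer estimate: for each $\alpha' \in \langle \alpha \rangle$ of prime order, I need $|C_G(\alpha')|$ bounded by $q^{d(d+1)/2}$ (resp.\ $q^{52}$). Here $\alpha'$ is (up to inner twist) either a field automorphism of order a prime $p \mid f$, a graph automorphism of order~$2$, a graph-field automorphism, or an inner involution $\ad_u$. For field automorphisms of prime order~$p$, $C_G(\alpha') \cong$ a group of the same Lie type over $\F_{q^{1/p}}$, so $|C_G(\alpha')| \le |G|^{1/p} \le |G|^{1/2}$, and $|G|^{1/2} \le q^{d^2/2}$ for $\PSL_d$/$\PGU_d$ — but that exponent $d^2/2$ is a bit too large compared with the target $d(d+1)/2$, so I would instead use the sharper bound $|G|_{2'} \le q^{\dim \overline{G}}$ more carefully, or simply note that for $p \ge 2$ one has $|C_G(\alpha')| \le q^{\dim \overline{G}/p + O(\mathrm{rank})}$ and check the inequality holds once $d \ge 5$, $q > 4$ by a direct comparison of exponents. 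For graph and graph-field automorphisms, $C_G(\iota)$ is of type $B$, $C$ (a symplectic or orthogonal group) inside $\PSL_d$, or of type $F_4$ or ${^2\!E}_6$/$E_6$ inside the $E_6$ cases, and in every case \cite[Proposition~$4.9.1$]{GLS} gives an explicit order, again dominated by the target power of~$q$ for the stated ranges of $q$. For inner involutions $\ad_u$, $C_{\overline{G}}(u)$ is a connected reductive subgroup whose dimension is maximized (over involution classes) by a Levi or a symmetric subgroup, and one reads off $\dim C_{\overline{G}}(u) \le d(d+1)/2$ in type $A_{d-1}$ and $\le 52$ (the $F_4$ bound, or the $D_5 A_1$ / $A_5 A_1$ bounds) in type $E_6$, whence $|C_G(\ad_u)| \le q^{\dim C_{\overline{G}}(u)} \le q^{d(d+1)/2}$ resp.\ $q^{52}$. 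Assembling these four cases, the hypothesis on $\chi(1)$ forces $\dim(V) > (|\alpha|-1)|C_G(\alpha')|^{1/2}$ for every relevant $\alpha'$, and Proposition~\ref{prop:3} then yields the $E1$-property for $(G,V)$, as claimed. The delicate point throughout is that the numerology is tight for the \emph{smallest} admissible $q$ and $d$, which is exactly why the hypotheses carry the restrictions $d \ge 5$, $q > 4$ in~(a) and $q > 16$ in~(b); the few excluded small cases are precisely those handled separately elsewhere in \cite{HiLu}.
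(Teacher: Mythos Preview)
Your overall strategy is exactly the paper's: apply the Large Degree Method (Proposition~\ref{prop:3}) after producing an $\alpha = \ad_g \circ \nu$ of even order with $|\alpha|$ and $|C_G(\alpha')|$ controlled. The centralizer target $q^{d(d+1)/2}$ in~(a) and $q^{52}$ in~(b) matches the paper, and the use of \cite[Propositions~$4.9.1$,~$4.9.2$]{GLS} is the right tool.

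There is, however, a genuine gap in your bound on $|\alpha|$. When you write $\nu = \ad_g \circ \mu$ with $g \in \overline{G}^{\sigma}$ and $\mu \in \Phi_G \times \Gamma_G$, you cannot simply ``replace $g$ by its image in $G$'': the coset $gG \in \overline{G}^{\sigma}/G$ is a \emph{diagonal} automorphism, which is nontrivial in exactly the cases under consideration (order up to $\gcd(d,q\mp 1)$ in~(a), order~$3$ in~(b)). Your remark about $-\id_V \notin G$ is a red herring; that condition concerns the representation, not the inclusion $G \leq \overline{G}^{\sigma}$. Consequently your bound $|\alpha| \leq 2\cdot 2f$ in~(a) is wrong. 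The paper obtains $|\alpha| \leq 2f(q+1)$, the factor $q+1$ absorbing the diagonal contribution, and then checks $q^2 = 2^{2f} > 2f(2^f+1)$ for $f > 2$. This is precisely why the hypothesis in~(a) carries the factor $q^2$ and the restriction $q > 4$: with your bound $4f$ one would need only $\chi(1) > 4f \cdot q^{d(d+1)/4}$, which neither matches the stated hypothesis nor explains the threshold. Likewise in~(b) the correct bound is $|\alpha| \leq 6f$ (the $6$ coming from the diagonal order~$3$ times the graph order~$2$), and $q > 6f$ forces $q > 16$. A secondary point: when $|\mu|$ is odd and you adjoin a $\mu$-stable involution $u$, the inner element $\alpha^{|\mu|} = \ad_u$ appears among the $\alpha'$, so you must choose $u$ with $|C_G(u)| \leq q^{d(d+1)/2}$; an arbitrary involution will not do (e.g.\ a transvection in $\PSL_5(q)$ has centralizer of order roughly $q^{16} > q^{15}$). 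The paper's sketch suppresses this, but your write-up asserts the bound for \emph{all} inner involutions, which is false.
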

\begin{proof}
(Rough sketch) Recall that $q = 2^f$. Let $\beta \in \Aut(G)$. Then there is 
$g \in G$ such that $\alpha := \ad_g \circ \beta$ has even order and the 
following properties hold.

If $G$ is as in (a), then $|\alpha| \leq 2f(q+1)$ and $|C_G( \alpha' )| 
\leq q^{d(d+1)/2}$ for every $\alpha' \in \langle \alpha \rangle$ of prime 
order. Observe that $q^2 = 2^{2f} > 2f(2^f+1)$ for $f > 2$.

If $G$ is as in (b),  then $|\alpha| \leq 6f$ and $|C_G( \alpha' )| 
\leq q^{52}$ for every $\alpha' \in \langle \alpha \rangle$ of prime 
order. Observe that $q = 2^f > 6f$ for $f > 4$.

The assertions now follow from Proposition~\ref{prop:3}.
\end{proof}
For small values of~$q$ and for $d = 3$ in (a) above, we derive more precise
estimates for the orders of the automorphisms~$\alpha$, to deal with more cases.

For the characters~$\chi$, which do not meet the degree estimates of
Proposition~\ref{prop:5} and their refinements, we usually can apply 
Lemma~\ref{lem:7}. If $G = \PGL_d(q)$,
let $\hat{s} \in \GL_d(q)$ be a real lift of an element 
$s \in \overline{G}^{\sigma}$ parametrizing~$\chi$ as in Lemma~\ref{lem:6}. The 
fact that~$\chi(1)$ is smaller than the estimate required in 
Proposition~\ref{prop:5} implies strong restrictions on~$\hat{s}$, acting on its 
natural vector space. Namely, either the fixed point space of~$\hat{s}$ has 
dimension at least~$d/3$, or~$\hat{s}$ has at most three distinct eigenvalues 
$1, \zeta, \zeta^{-1}$. It is then not difficult to construct the Levi 
subgroup~$\overline{L}$ as required in Lemma~\ref{lem:7}.
A similar approach works for the group $G = \PGU_d( q )$. For the exceptional
groups of type~$E_6$, we use the lists of explicit character degrees computed
by L{\"u}beck and given in~\cite{LL2}, as well as the Chevie system 
\cite{chevie,Michel} for extensive computations in the Weyl group of type $E_6$.
Still, there are numerous small cases, which cannot be handled either way. 
These are treated with computational methods using GAP~\cite{GAP04}. 

\begin{acknowledgement}
The authors thank, in alphabetical order, Thomas Breuer, Xenia Flamm, Meinolf
Geck, Jonas Hetz,
Frank Himstedt,  
Frank L{\"u}beck, Jean Michel, Britta Sp{\"a}th, Andrzej Szczepa{\'n}ski and
Jay Taylor for numerous helpful hints and enlightening conversations.

The first named author also thanks the organizers and sponsors of the IGT 2024
for inviting him to this conference, as well as the organizers and sponsors of
The Twenty Third Andrzej Jankowski Memorial Lecture at the University of 
Gda{\'n}sk, where he presented these results at the accompanying Mini 
conference.
\end{acknowledgement}

\end{document}